\newtheorem{theorem}{Theorem}[section]
\newtheorem{lemma}{Lemma}[section]
\newtheorem{example}{Example}[section]
\newtheorem{rem}{Remark}[section]
\numberwithin{equation}{section}
\def\vector{\mathrm{vec}}
\journal{
        }
\begin{document}
\begin{frontmatter}
\title{ Perturbation analysis of the matrix equation $X-\sum\limits_{i=1}^{m}A_{i}^*X^{p_{i}}A_{i}=Q$
 \tnoteref{label1} }
\tnotetext[label1]{The work was supported in part by Natural Science Foundation of Shandong Province (ZR2012AQ004).}
\author
       {Jing Li
        }
\ead{xlijing@sdu.edu.cn}

\address
               {School of Mathematics and Statistics, Shandong University at Weihai, Weihai 264209, P.R. China}

\begin{abstract}
Consider the nonlinear matrix equation $X-\sum\limits_{i=1}^{m}A_{i}^*X^{p_{i}}A_{i}=Q$ with $p_{i}>0$. Sufficient and necessary conditions for the existence of
positive definite solutions to the equation with
$p_{i}>0$ are derived. Two perturbation bounds for the unique solution to the equation with $0<p_{i}<1$ are evaluated. The
backward error of an approximate solution for the unique solution to the equation with $0<p_{i}<1$ is given.
Explicit expressions of the condition number for the equation with $0<p_{i}<1$ are obtained. The theoretical results are illustrated by numerical examples.
\end{abstract}

\begin{keyword}
nonlinear matrix equation \sep positive definite solution \sep
perturbation bound\sep backward error\sep condition number
\end{keyword}
\end{frontmatter}

\section{Introduction}
In this paper the nonlinear matrix equation
\begin{equation}\label{eq1}                                   
X-\sum\limits_{i=1}^{m}A_{i}^*X^{p_{i}}A_{i}=Q
\end{equation}
is investigated, where $A_{1}, A_{2}, \ldots , A_{m}$ are $n\times n$ complex matrices, $m$ is a positive integer, $p_{i}>0 \;(i=1,2, \cdots m)$ and $Q$ is a positive definite matrix. Here, $A_{i}^{*}$ denotes
the conjugate transpose of the matrix $A_{i}$.

When $m>1$, Eq.(\ref{eq1}) is recognized as playing an important role in
solving a system of linear equations. For example, in many physical calculations,
one must solve the system of linear equation
$$Mx=f,$$
where $$M=\left(\begin{array}{ccccc}
I & 0& \cdots& 0& A_{1}\\
0 & I& \cdots& 0& A_{2}\\
\vdots & \vdots & \ddots & \vdots & \vdots\\
0 & 0& \cdots & I& A_{m}\\
A_{1}^{*} & A_{2}^{*}& \cdots & A_{m}^{*}& -Q
\end{array}\right)$$
arises in a finite difference approximation to an elliptic partial differential equation (for more information,
refer to \cite{s7} ).
We can rewrite $M$ as $M=\widetilde{M}+D$, where
$$\widetilde{M}=\left(\begin{array}{ccccc}
X^{-p_{1}} & 0& \cdots& 0& A_{1}\\
0 & X^{-p_{2}}& \cdots& 0& A_{2}\\
\vdots & \vdots & \ddots & \vdots & \vdots\\
0 & 0& \cdots & X^{-p_{m}}& A_{m}\\
A_{1}^{*} & A_{2}^{*}& \cdots & A_{m}^{*}& -Q
\end{array}\right),\;\;\;
D=\left(\begin{array}{ccccc}
I-X^{-p_{1}} & 0& \cdots& 0& 0\\
0 & I-X^{-p_{2}}& \cdots& 0& 0\\
\vdots & \vdots & \ddots & \vdots & \vdots\\
0 & 0& \cdots & I-X^{-p_{m}}& 0\\
0 & 0& \cdots & 0 & 0
\end{array}\right).$$
$\widetilde{M}$ can be factored as $$\widetilde{M}=\left(\begin{array}{ccccc}
-I & 0& \cdots& 0& 0\\
0 & -I& \cdots& 0& 0\\
\vdots & \vdots & \ddots & \vdots & \vdots\\
0 & 0& \cdots & -I & 0\\
-A_{1}^{*}X^{p_{1}} & -A_{2}^{*}X^{p_{2}}& \cdots & -A_{m}^{*}X^{p_{m}}& -I
\end{array}\right)
\left(\begin{array}{ccccc}
-X^{-p_{1}} & 0& \cdots& 0& -A_{1}\\
0 & -X^{-p_{2}}& \cdots& 0& -A_{2}\\
\vdots & \vdots & \ddots & \vdots & \vdots\\
0 & 0& \cdots & -X^{-p_{m}}& -A_{m}\\
0 & 0& \cdots & 0& X
\end{array}\right)$$ if and only if $X$ is a solution of equation
$X-\sum\limits_{i=1}^{m}A_{i}^*X^{p_{i}}A_{i}=Q$.
When $m=1,$ this type of nonlinear matrix
equations arises in ladder networks, dynamic
programming, control theory, stochastic filtering, statistics and
so forth \cite{s1,s2,s3,s4,s5,s6}.

For the similar equations $X\pm A^{*}X^{-p}A=Q$, $X^{s}\pm A^{*}X^{-t}A=Q$ and $X+\sum\limits_{i=1}^{m}A_{i}^*X^{-1}A_{i}=I$ , there were many contributions
in the literature to the theory, applications and numerical
solutions \cite{s30,s8,s9,s10,s31,s12,s13,s22,s23,s24,s25,s26,s14,s15,s16,s17,s18,s19,s20,s21,s27,s28,s39}.
Jia and Gao \cite{s38} derived two perturbation estimates for the solution of the equation $X-A^{*}X^{q}A=Q$ with $0<q<1$. In addition, Duan et al. \cite{s29} proved that the equation
$X-\sum\limits_{i=1}^{m}A_{i}^*X^{\delta_{i}}A_{i}=Q\;(0<|\delta_{i}|<1)$
has a unique positive definite solution. They also proposed an iterative method
for obtaining the unique positive definite solution. However, to our best knowledge, there has been no perturbation analysis for Eq.(\ref{eq1}) with $m>1$ in the known literatures.

The rest of the paper is organized as follows. In Section 2, some preliminary lemmas are given.
In Section 3 , sufficient and necessary conditions for Eq. (\ref{eq1})
existing positive definite solutions are derived.
In Section 4 , two perturbation bounds for
 the unique solution to Eq.(\ref{eq1}) with $0<p_{i}<1$ are given. Furthermore, in Section 5,  we obtain the backward error of an approximate solution for Eq.(\ref{eq1}) with $0<p_{i}< 1$. In Section 6, we also discuss the condition number of the unique solution to Eq.(\ref{eq1}). Finally, several numerical
  examples are presented in Section 7.

We denote by $\mathcal{C}^{n\times n}$ the set of $n\times n$
complex matrices, by $\mathcal{H}^{n\times n}$ the set of $n\times n$
Hermitian matrices, by $I$ the identity matrix, by $\textbf{i}$ the imaginary unit, by $\|\cdot\|$ the spectral
norm, by $\|\cdot\|_{F}$ the Frobenius norm and by $\lambda_{\max}(M)$ and
$\lambda_{\min}(M)$ the maximal and minimal eigenvalues of $M$,
respectively. For $A=(a_{1},\dots, a_{n})=(a_{ij})\in
\mathcal{C}^{n\times n}$ and a matrix $B$, $A\otimes B=(a_{ij}B)$
is a Kronecker product, and $\vector A$
is a vector defined by $\vector A=(a_{1}^{T},\dots,
a_{n}^{T})^{T}$. For $X, Y\in\mathcal{H}^{n\times n}$, we write $X\geq Y$ (resp. $X>Y)$ if
$X-Y$ is Hermitian positive semi-definite (resp. definite).
\section{Preliminaries}
\begin{lemma}\label{lem1}                   
\cite{s33}. If $A\geq B>0$ and $0\leq \gamma\leq 1,$ then $A^{\gamma}\geq B^{\gamma}.$
\end{lemma}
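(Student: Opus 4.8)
The plan is to recognize this as the classical L\"owner--Heinz inequality and to prove it via the integral representation of the fractional power. For $0<\gamma<1$ and $t>0$ one has the scalar identity
\[
t^{\gamma}=\frac{\sin(\gamma\pi)}{\pi}\int_{0}^{\infty}\frac{t}{t+s}\,s^{\gamma-1}\,ds,
\]
which follows from the beta integral $\int_{0}^{\infty}u^{\gamma-1}(1+u)^{-1}\,du=\pi/\sin(\gamma\pi)$ after the substitution $s=tu$. Since $A$ and $B$ are positive definite, applying this identity through the Hermitian functional calculus (diagonalize and act eigenvalue by eigenvalue) gives
\[
A^{\gamma}=\frac{\sin(\gamma\pi)}{\pi}\int_{0}^{\infty}A(A+sI)^{-1}s^{\gamma-1}\,ds,\qquad
B^{\gamma}=\frac{\sin(\gamma\pi)}{\pi}\int_{0}^{\infty}B(B+sI)^{-1}s^{\gamma-1}\,ds.
\]
Subtracting and using $A(A+sI)^{-1}=I-s(A+sI)^{-1}$, the two identity terms cancel, so that
\[
A^{\gamma}-B^{\gamma}=\frac{\sin(\gamma\pi)}{\pi}\int_{0}^{\infty}\bigl((B+sI)^{-1}-(A+sI)^{-1}\bigr)s^{\gamma}\,ds.
\]

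The heart of the argument is then the claim that $A\geq B>0$ forces $(B+sI)^{-1}\geq(A+sI)^{-1}$ for every $s>0$, i.e.\ that inversion is antitone on positive definite matrices. To see this, set $C=B+sI>0$ and write $A+sI=C^{1/2}\bigl(I+C^{-1/2}(A-B)C^{-1/2}\bigr)C^{1/2}$; the middle factor $N=I+C^{-1/2}(A-B)C^{-1/2}$ satisfies $N\geq I$ because $A-B\geq0$, hence $I-N^{-1}=N^{-1/2}(N-I)N^{-1/2}\geq0$, i.e.\ $N^{-1}\leq I$. Conjugating back, $(A+sI)^{-1}=C^{-1/2}N^{-1}C^{-1/2}\leq C^{-1}=(B+sI)^{-1}$. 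Consequently the integrand in the last display is positive semidefinite for each $s>0$, and since $\sin(\gamma\pi)>0$ on $(0,1)$, the integral is positive semidefinite, giving $A^{\gamma}\geq B^{\gamma}$. The endpoint cases $\gamma=0$ and $\gamma=1$ are immediate.

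The only genuine obstacle is making the operator-valued integral representation rigorous: one must check that the scalar identity lifts through the functional calculus and that the improper integral converges in the spectral norm. Both points are routine — diagonalization reduces the first to the scalar formula applied on each eigenvalue, and for convergence the integrand $(B+sI)^{-1}-(A+sI)^{-1}$ is $O(1)$ as $s\to0^{+}$ (so $s^{\gamma}$ is integrable there) and $O(s^{-2})$ as $s\to\infty$ (so $s^{\gamma-2}$ is integrable there, since $\gamma-2<-1$). As an alternative that avoids integrals altogether, one can invoke the standard bootstrapping argument: the set of $\gamma\in[0,1]$ for which the implication holds is closed, contains $0$ and $1$, and is stable under averaging, hence is all of $[0,1]$; but the integral route above is the most self-contained.
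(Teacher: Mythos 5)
Your argument is correct: this is the L\"owner--Heinz inequality, which the paper does not prove but simply cites from Zhan's \emph{Matrix Inequalities}, so there is no in-paper proof to compare against. Your integral-representation route is one of the standard proofs, and all the steps check out --- the beta-integral identity, the lift through the Hermitian functional calculus, the antitonicity of inversion via conjugation by $C^{1/2}$, and the convergence estimates at $0$ and $\infty$. It is also a natural choice in context, since the representation $t^{\gamma}=\frac{\sin(\gamma\pi)}{\pi}\int_{0}^{\infty}\frac{t}{t+s}\,s^{\gamma-1}\,ds$ you use is exactly the one the paper itself records (in the equivalent form $X^{q}=\frac{\sin q\pi}{\pi}\int_{0}^{\infty}X^{1/2}(\lambda I+X)^{-1}X^{1/2}\lambda^{q-1}\,d\lambda$) as Lemma~2.2(i) and relies on throughout the perturbation analysis.
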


\begin{lemma}\label{lem2}                   
\cite{s38}. For any Hermitian positive definite matrix $X$ and Hermitian matrix $\Delta X$, we have
 \begin{enumerate}
   \item[(i)]$X^{q}=\displaystyle\frac{\sin q\pi}{\pi}\int_{0}^{\infty}X^{\frac{1}{2}}(\lambda I+X)^{-1}X^{\frac{1}{2}}\lambda^{q-1}d\lambda,\;\;0<q<1;$
   \item[(ii)]$X^{q}=\displaystyle\frac{\sin q\pi}{(1-q)\pi}\int_{0}^{\infty}X^{\frac{1}{2}}(\lambda I+X)^{-1}X(\lambda I+X)^{-1}X^{\frac{1}{2}}\lambda^{q-1}d\lambda,\;\;0<q<1.$
 \end{enumerate}
   In addition, if $X+\Delta X\geq (1/\nu) X>0$, then
\begin{enumerate}
\item[(iii)]$ \|X^{-\frac{1}{2}}A^{*}((X+\Delta X)^{q}-X^{q})AX^{-\frac{1}{2}}\|
\leq (1-q)(\|X^{-\frac{1}{2}}\Delta X X^{-\frac{1}{2}}\|+\nu \|X^{-\frac{1}{2}}\Delta X X^{-\frac{1}{2}}\|^{2})\|X^{\frac{q}{2}}AX^{-\frac{1}{2}}\|^{2}.$
\end{enumerate}
\end{lemma}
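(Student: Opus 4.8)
The plan is to derive (i) and (ii) from one scalar integral identity, and (iii) from the representation (ii) together with the second resolvent identity.

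For (i) and (ii): since both sides are primary matrix functions of $X$, I would diagonalize $X=U\,\mathrm{diag}(\mu_1,\dots,\mu_n)\,U^*$ and check the identities on each eigenvalue $\mu>0$. Because $X^{1/2}$ commutes with $(\lambda I+X)^{-1}$, the matrix $X^{1/2}(\lambda I+X)^{-1}X^{1/2}$ has eigenvalues $\mu/(\lambda+\mu)$ and $X^{1/2}(\lambda I+X)^{-1}X(\lambda I+X)^{-1}X^{1/2}=\bigl(X(\lambda I+X)^{-1}\bigr)^{2}$ has eigenvalues $\mu^{2}/(\lambda+\mu)^{2}$, so (i) reduces to $\int_{0}^{\infty}\lambda^{q-1}/(\lambda+\mu)\,d\lambda=\pi\mu^{q-1}/\sin q\pi$ and (ii) to $\int_{0}^{\infty}\lambda^{q-1}/(\lambda+\mu)^{2}\,d\lambda=(1-q)\pi\mu^{q-2}/\sin q\pi$. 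The first is classical (substitute $\lambda=\mu t$ and use Euler's reflection formula $\Gamma(q)\Gamma(1-q)=\pi/\sin q\pi$, or a keyhole contour); the second follows from it by differentiating in $\mu$ under the integral sign.

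For (iii): put $Y=X+\Delta X$, $P_{\lambda}=(\lambda I+X)^{-1}$, $R_{\lambda}=(\lambda I+Y)^{-1}$, and subtract the representation (ii) for $X^{q}$ from that for $Y^{q}$. Using $Z^{1/2}(\lambda I+Z)^{-1}Z(\lambda I+Z)^{-1}Z^{1/2}=\bigl(I-\lambda(\lambda I+Z)^{-1}\bigr)^{2}$ together with the resolvent identity $P_{\lambda}-R_{\lambda}=P_{\lambda}\Delta X R_{\lambda}$, the integrand of $Y^{q}-X^{q}$ splits into a term linear in $\Delta X$ plus a term quadratic in $\Delta X$ (involving $S_{\lambda}:=\lambda P_{\lambda}\Delta X R_{\lambda}$ squared). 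Conjugating by $X^{-1/2}A^{*}(\cdot)AX^{-1/2}$, substituting $\Delta X=X^{1/2}\bigl(X^{-1/2}\Delta X X^{-1/2}\bigr)X^{1/2}$, and extracting the scalar $\|X^{-1/2}\Delta X X^{-1/2}\|$ from each term, one is left with $\lambda$-integrals of products of resolvent factors against $\lambda^{q-1}$; the idea is to keep the $P_{\lambda}$-factors grouped so that, after a Cauchy--Schwarz step in $\lambda$, every surviving integral is one of the Beta-type integrals from (i)--(ii), which is what regenerates the $(1-q)$ in the constant and the factor $\|X^{q/2}AX^{-1/2}\|^{2}=\|X^{-1/2}A^{*}X^{q}AX^{-1/2}\|$. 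The hypothesis $X+\Delta X\ge(1/\nu)X$ (one may take $\nu\ge1$) enters only through $R_{\lambda}\le\nu P_{\lambda}$, applied to the $Y$-resolvent occurring in the quadratic term, combined with Lemma~\ref{lem1} to replace a $Y^{q}$ by an $X^{q}$; this is the source of the extra $\nu$ multiplying $\|X^{-1/2}\Delta X X^{-1/2}\|^{2}$, while the linear term, which needs no such replacement, produces the $\|X^{-1/2}\Delta X X^{-1/2}\|$ contribution.

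The scalar integral evaluations and the Cauchy--Schwarz step are routine. The main obstacle is the bookkeeping of the $\lambda$-integrals: the resolvent factors must be organized so that each surviving integral is one of the convergent integrals above, since a crude term-by-term submultiplicative estimate either fails to converge (as $\lambda\to0$ or $\lambda\to\infty$) or yields a bound depending on $\lambda_{\min}(X)$ and $\lambda_{\max}(X)$ rather than on the quantities appearing in the statement; aligning the powers of $\lambda$ so that the normalization $\sin q\pi/((1-q)\pi)$ collapses exactly to $(1-q)$ is the delicate point.
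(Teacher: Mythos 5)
Your handling of (i) and (ii) is complete and correct: reducing to the scalar identities $\int_0^\infty\lambda^{q-1}(\lambda+\mu)^{-1}d\lambda=\pi\mu^{q-1}/\sin q\pi$ and its $\mu$-derivative is all that is needed, since $X^{1/2}$, $X$ and $(\lambda I+X)^{-1}$ commute. This is already more than the paper itself offers, because Lemma~\ref{lem2} is simply quoted from \cite{s38} with no proof.

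The genuine gap is in (iii), and it is not the deferred ``bookkeeping'': the alignment you are counting on cannot be achieved, because the inequality as stated is false for $q>1/2$. After one application of the resolvent identity, the part of $(X+\Delta X)^q-X^q$ that is linear in $\Delta X$ is exactly the Fr\'echet derivative $\frac{\sin q\pi}{\pi}\int_0^\infty\lambda^{q}(\lambda I+X)^{-1}\Delta X(\lambda I+X)^{-1}d\lambda$, and the Beta integral it produces is $\frac{\sin q\pi}{\pi}\int_0^\infty\lambda^{q}\mu(\lambda+\mu)^{-2}d\lambda=q\mu^{q}$; so the constant emerging from the linear term is $q$, not $1-q$. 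The factor $1-q$ belongs only to the quadratic term, where the relevant integral is $\frac{\sin q\pi}{\pi}\int_0^\infty\lambda^{q-1}\mu^{2}(\lambda+\mu)^{-2}d\lambda=(1-q)\mu^{q}$ from your identity (ii). Concretely, take $n=1$, $A=1$, $X=1$, $\Delta X=\epsilon>0$ small, $\nu=1$: the left side of (iii) is $(1+\epsilon)^q-1=q\epsilon+O(\epsilon^{2})$ while the right side is $(1-q)(\epsilon+\epsilon^{2})$, so the inequality fails for every $q>1/2$. (All of the paper's numerical examples happen to use exponents $p_i\le 1/2$, which is why nothing blows up there.) A secondary problem with your sketch: the hypothesis $X+\Delta X\ge(1/\nu)X$ together with Lemma~\ref{lem1} gives a \emph{lower} bound $(X+\Delta X)^q\ge\nu^{-q}X^q$, so it cannot be used to ``replace a $Y^{q}$ by an $X^{q}$'' in an upper estimate; the hypothesis is only usable through $(\lambda I+X+\Delta X)^{-1}\le\nu(\lambda I+X)^{-1}$ sandwiched inside a term of the form $M^{*}(\lambda I+X+\Delta X)^{-1}M$. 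The best your route (or any route) can deliver is (iii) with $q$, or $1$, in place of $1-q$ on the linear term --- or the stated bound restricted to $q\le 1/2$ --- and this defect propagates into inequality (\ref{eq16}) and hence into Theorem~\ref{thm6} for exponents $p_i>1/2$.
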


\begin{lemma}\label{lem3}                 
\cite{s29}. The matrix equation
$X-\sum\limits_{i=1}^{m}A_{i}^{*}X^{\delta_{i}}A_{i}=Q\;(0<|\delta_{i}|<1)$ always has a unique positive definite solution $X$. The matrix sequence $X_{k}:$
\begin{equation}\label{eq13}
X_{s+m+1}=Q+\sum\limits_{i=1}^{m}A_{i}^{*}X^{\delta_{i}}_{s+i}A_{i},\;\;s=0, 1, 2, \cdots ,
\end{equation}
converges to the unique positive definite solution $X$ for arbitrary initial positive definite matrices $X_{1}, X_{2}, \cdots, X_{m}.$
\end{lemma}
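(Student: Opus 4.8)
The plan is to recast the problem as a fixed-point problem on the open cone $\mathcal{P}$ of $n\times n$ Hermitian positive definite matrices, equipped with the Thompson metric
$$d(X,Y)=\big\|\log\big(Y^{-1/2}XY^{-1/2}\big)\big\|=\max\{\ln M(X/Y),\ \ln M(Y/X)\},\qquad M(X/Y):=\min\{\lambda>0:X\le\lambda Y\},$$
for which $(\mathcal{P},d)$ is a complete metric space. Since $Q>0$ and each term $A_i^{*}X^{\delta_i}A_i$ is positive semidefinite, the map $\mathcal{F}(X):=Q+\sum_{i=1}^m A_i^{*}X^{\delta_i}A_i$ sends $\mathcal{P}$ into itself, and a positive definite solution of the equation is precisely a fixed point of $\mathcal{F}$.

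The core is a set of non-expansiveness estimates for $d$ that I would establish or quote. First, for $|\delta|\le1$ the power map obeys $d(X^{\delta},Y^{\delta})\le|\delta|\,d(X,Y)$: when $0\le\delta\le1$ this follows from Lemma~\ref{lem1} applied to $e^{-c}Y\le X\le e^{c}Y$ with $c=d(X,Y)$, and when $-1\le\delta<0$ one composes this with the fact that $X\mapsto X^{-1}$ is a $d$-isometry. Second, congruence $X\mapsto A^{*}XA$ is non-expansive, and the operations ``add the $m$ terms'' and ``add the fixed matrix $Q$'' do not increase $d$ (attaching $Q>0$ is what keeps every matrix in $\mathcal{P}$ even when some $A_i$ is singular). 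Chaining these with $q:=\max_{1\le i\le m}|\delta_i|<1$ gives
$$d(\mathcal{F}(X),\mathcal{F}(Y))\le\max_{1\le i\le m}d\big(X^{\delta_i},Y^{\delta_i}\big)\le q\,d(X,Y),$$
so $\mathcal{F}$ is a contraction on $(\mathcal{P},d)$, and the Banach fixed-point theorem yields existence and uniqueness of the positive definite solution $X$.

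For the delayed iteration $X_{s+m+1}=Q+\sum_{i=1}^m A_i^{*}X_{s+i}^{\delta_i}A_i$, I would set $e_k:=d(X_k,X)$ and run the same chain of estimates with $X_{s+i}$ and $X$ to obtain, for every $s\ge0$,
$$e_{s+m+1}\le\max_{1\le i\le m}d\big(X_{s+i}^{\delta_i},X^{\delta_i}\big)\le q\,\max_{1\le i\le m}e_{s+i}.$$
Since $M_0:=\max_{1\le i\le m}e_i<\infty$, a short induction over consecutive blocks of $m$ indices gives $e_k\le q^{\lfloor(k-1)/m\rfloor}M_0\to0$; and $d(X_k,X)\to0$ forces $X^{-1/2}X_kX^{-1/2}\to I$, hence $\|X_k-X\|\to0$, for any positive definite starting matrices $X_1,\dots,X_m$.

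The main obstacle is not a single computation but assembling the building blocks consistently: the $|\delta|$-Lipschitz estimate for the power map must be checked separately for the negative exponents through the inversion isometry, and the additivity estimate has to be arranged so that $d$ is only ever applied to genuinely positive definite matrices — which is why the additive $Q$ is kept attached to the whole sum rather than split off term by term (an elementary monotone-iteration argument is much more delicate here precisely because the exponents $\delta_i$ have mixed signs, so the Thompson-metric route is the natural one). Once these points are handled, both the existence/uniqueness statement and the geometric convergence of the $m$-step iteration drop out of the single contraction constant $q=\max_i|\delta_i|<1$.
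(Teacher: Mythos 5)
The paper states this lemma without proof, simply citing Duan, Liao and Tang \cite{s29}, and your Thompson-metric contraction argument is correct and is essentially the argument used in that cited source: the map $X\mapsto Q+\sum_i A_i^*X^{\delta_i}A_i$ is a strict contraction with constant $q=\max_i|\delta_i|<1$ via the Löwner--Heinz inequality, the inversion isometry, and the non-expansiveness of congruence-plus-translation, after which Banach's theorem and your block induction handle both uniqueness and the delayed iteration. You also correctly flag the one delicate point (keeping $Q$ attached so the metric is only evaluated on genuinely positive definite matrices even when some $A_i$ is singular), so there is nothing to repair.
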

\section{Positive definite solutions of the matrix Eq.(1.1)}
In this section, sufficient and necessary conditions for the existence of positive definite solutions of Eq.(\ref{eq1}) are obtained.

\begin{theorem}     \label{thm1}                                                                     
Eq.(\ref{eq1}) has a positive definite solution $X$ if and only if the coefficient matrices $A_{i}$ can be factored as
\begin{equation}\label{eq2}                                           
A_{i}=(W^{*}W)^{-p_{i}/2}Y_{i}(W^{*}W)^{1/2},
\end{equation}
where $W,$ $Y_{i}\;(i=1,2, \cdots, m)$ are nonsingular matrices, $Z=Q^{\frac{1}{2}}(W^{*}W)^{-\frac{1}{2}}$  and
$\left(\begin{array}{c}
Z\\
 Y_{1}\\
  \vdots \\
 Y_{m} \\
  \end{array}
   \right)$ is column orthonormal. In this case $X=W^{*}W$ is a solution of Eq.(\ref{eq1}).
\end{theorem}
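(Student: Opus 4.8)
The plan is to establish the equivalence by an explicit two-way construction in which a positive definite solution of (\ref{eq1}) and the data $(W,Y_{1},\dots,Y_{m})$ are linked by $X=W^{*}W$. The algebraic fact that drives both directions is that column-orthonormality of the block column matrix with blocks $Z,Y_{1},\dots,Y_{m}$ is precisely the identity $Z^{*}Z+\sum_{i=1}^{m}Y_{i}^{*}Y_{i}=I$, together with the observation that the relation $Z=Q^{1/2}(W^{*}W)^{-1/2}$ gives $Z^{*}Z=(W^{*}W)^{-1/2}Q(W^{*}W)^{-1/2}$.

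\textbf{Sufficiency.} Assume $A_{i}=(W^{*}W)^{-p_{i}/2}Y_{i}(W^{*}W)^{1/2}$ for each $i$, and put $X=W^{*}W$; this is Hermitian positive definite because $W$ is nonsingular. Using that every real power of the positive definite matrix $W^{*}W$ is Hermitian and that these powers commute with one another, I would compute
\begin{align*}
A_{i}^{*}X^{p_{i}}A_{i}&=(W^{*}W)^{1/2}Y_{i}^{*}(W^{*}W)^{-p_{i}/2}(W^{*}W)^{p_{i}}(W^{*}W)^{-p_{i}/2}Y_{i}(W^{*}W)^{1/2}\\
&=(W^{*}W)^{1/2}Y_{i}^{*}Y_{i}(W^{*}W)^{1/2}.
\end{align*}
Summing over $i$ and substituting $\sum_{i=1}^{m}Y_{i}^{*}Y_{i}=I-Z^{*}Z=I-(W^{*}W)^{-1/2}Q(W^{*}W)^{-1/2}$ gives $\sum_{i=1}^{m}A_{i}^{*}X^{p_{i}}A_{i}=W^{*}W-Q=X-Q$, so $X$ solves (\ref{eq1}).

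\textbf{Necessity.} Let $X>0$ solve (\ref{eq1}). Take $W=X^{1/2}$ (hence $W^{*}W=X$ and $W$ is nonsingular), $Y_{i}=X^{p_{i}/2}A_{i}X^{-1/2}$ and $Z=Q^{1/2}X^{-1/2}$. Direct substitution yields $(W^{*}W)^{-p_{i}/2}Y_{i}(W^{*}W)^{1/2}=X^{-p_{i}/2}X^{p_{i}/2}A_{i}X^{-1/2}X^{1/2}=A_{i}$, and
\begin{equation*}
Z^{*}Z+\sum_{i=1}^{m}Y_{i}^{*}Y_{i}=X^{-1/2}\Bigl(Q+\sum_{i=1}^{m}A_{i}^{*}X^{p_{i}}A_{i}\Bigr)X^{-1/2}=X^{-1/2}XX^{-1/2}=I
\end{equation*}
by (\ref{eq1}), so the block column matrix with blocks $Z,Y_{1},\dots,Y_{m}$ is column orthonormal; moreover $Y_{i}$ is nonsingular precisely when $A_{i}$ is.

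I do not anticipate a genuine obstacle, since both directions come down to finite matrix manipulations. The two points that need care are the bookkeeping with the Hermitian, mutually commuting fractional powers of $W^{*}W$ in the sufficiency part, and the invertibility of the blocks $Y_{i}$ in the necessity part, which holds exactly when the coefficient matrices $A_{i}$ are nonsingular. The whole argument is the $m$-term analogue of the classical Smith-type factorization characterizing the positive definite solutions of $X\pm A^{*}X^{-1}A=Q$.
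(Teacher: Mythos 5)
Your proof is correct and follows essentially the same route as the paper: both directions reduce to the identity $Z^{*}Z+\sum_{i=1}^{m}Y_{i}^{*}Y_{i}=I$ obtained by conjugating Eq.(\ref{eq1}) with $(W^{*}W)^{-1/2}$, your choice $W=X^{1/2}$ being just a particular instance of the paper's arbitrary nonsingular factorization $X=W^{*}W$. Your closing remark that $Y_{i}$ is nonsingular exactly when $A_{i}$ is flags a hypothesis the paper's own proof silently assumes, but it does not alter the argument.
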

\begin{proof}
{If Eq.(\ref{eq1}) has a positive definite solution $X$, then there exists a  nonsingular matrix $W,$ s.t. $X=W^{*}W.$ Furthermore Eq.(\ref{eq1}) can be rewritten as
$$W^{*}W-\sum\limits_{i=1}^{m}A_{i}^{*}(W^{*}W)^{p_{i}}A_{i}=Q,$$ which implies that
\begin{equation}     \label{eq15}                                                               
(W^{*}W)^{-\frac{1}{2}}Q(W^{*}W)^{-\frac{1}{2}}+\sum\limits_{i=1}^{m}(W^{*}W)^{-\frac{1}{2}}A_{i}^{*}(W^{*}W)^{\frac{p_{i}}{2}}
(W^{*}W)^{\frac{p_{i}}{2}}A_{i}(W^{*}W)^{-\frac{1}{2}}=I.
\end{equation}
Let $Y_{i}=(W^{*}W)^{\frac{p_{i}}{2}}A_{i}(W^{*}W)^{-\frac{1}{2}},$ then $A_{i}=(W^{*}W)^{\frac{-p_{i}}{2}}Y_{i}(W^{*}W)^{\frac{1}{2}}.$ Moreover, by $Z=Q^{\frac{1}{2}}(W^{*}W)^{-\frac{1}{2}},$ (\ref{eq15}) turns into
$$\left(\begin{array}{c}
Z\\
 Y_{1}\\
  \vdots \\
 Y_{m} \\
  \end{array}
   \right)^{*}
   \left(\begin{array}{c}
Z\\
 Y_{1}\\
  \vdots \\
 Y_{m} \\
  \end{array}
   \right)=I,$$ which means that $\left(\begin{array}{c}
Z\\
 Y_{1}\\
  \vdots \\
 Y_{m} \\
  \end{array}
   \right) $ is column orthonormal.

   Conversely, suppose that $A_{i}$ has the decomposition (\ref{eq2}). Let $X=W^{*}W$. Then
   \begin{eqnarray*}
   X-\sum\limits_{i=1}^{m}A_{i}^{*}X^{p_{i}}A_{i} &=&
W^{*}W-\sum\limits_{i=1}^{m}(W^{*}W)^{\frac{1}{2}}Y_{i}^{*}(W^{*}W)^{-\frac{p_{i}}{2}}(W^{*}W)^{p_{i}}
(W^{*}W)^{-\frac{p_{i}}{2}} Y_{i}(W^{*}W)^{\frac{1}{2}}\\
     &=&  W^{*}W-\sum\limits_{i=1}^{m}(W^{*}W)^{\frac{1}{2}}Y_{i}^{*}Y_{i}(W^{*}W)^{\frac{1}{2}}\\
      &=& (W^{*}W)^{\frac{1}{2}}(I-\sum\limits_{i=1}^{m}Y_{i}^{*}Y_{i}) (W^{*}W)^{\frac{1}{2}}=
     (W^{*}W)^{\frac{1}{2}}Z^{*}Z (W^{*}W)^{\frac{1}{2}} \\
     &=&(W^{*}W)^{\frac{1}{2}}(W^{*}W)^{-\frac{1}{2}}Q^{\frac{1}{2}}Q^{\frac{1}{2}}
     (W^{*}W)^{-\frac{1}{2}}(W^{*}W)^{\frac{1}{2}}=Q,
   \end{eqnarray*}
    that is $X=W^{*}W$ being a positive definite solution of Eq.(\ref{eq1}).
}\end{proof}

\begin{theorem}     \label{thm2}                 
If $A_{1}, A_{2}, \cdots, A_{m}$ are invertible and $Q\in\mathcal{H},$ then Eq. (\ref{eq1}) has a positive definite solution $X$
if and only if $A_{i}$ can be factored as
\begin{equation}\label{eq3}                                            
A_{i}=(U^{*}MU)^{-\frac{p_{i}}{2}}V_{i}NU,
\end{equation}
where $N>0$, $U$ is unitary and $M>UQU^{*}$ is diagonal, $M-N^{2}=UQU^{*}$ and $\left(\begin{array}{c}
V_{1}\\
 V_{2}\\
  \vdots \\
 V_{m} \\
  \end{array}
   \right)
  $ is column orthonormal.
 In this case, X=$U^{*}MU$ is a solution of Eq.(\ref{eq1}).
\end{theorem}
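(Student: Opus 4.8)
The plan is to mimic the proof of Theorem~\ref{thm1} but use the extra hypotheses ($A_i$ invertible, $Q$ Hermitian) to diagonalize the solution. First, for the necessity direction, suppose Eq.~(\ref{eq1}) has a positive definite solution $X$. Since $X$ is Hermitian positive definite, by the spectral theorem there is a unitary $U$ and a positive diagonal matrix $M$ with $X=U^{*}MU$. Substituting this into Eq.~(\ref{eq1}) and conjugating by $U$ on the appropriate sides, I would rewrite the equation as
\begin{equation*}
M-\sum_{i=1}^{m}(UA_i^{*}U^{*})M^{p_i}(UA_iU^{*})=UQU^{*}.
\end{equation*}
Setting $N=(M-UQU^{*})^{1/2}$, which is well defined and positive definite exactly because the left side minus $UQU^{*}$ equals $\sum_i (UA_i^{*}U^{*})M^{p_i}(UA_iU^{*})\ge 0$ and, using invertibility of the $A_i$, is in fact strictly positive, giving $M>UQU^{*}$. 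Then $M-N^{2}=UQU^{*}$. Dividing the sum $\sum_i (UA_i^{*}U^{*})M^{p_i}(UA_iU^{*})=N^{2}$ on both sides by $N$, I define $V_i = M^{p_i/2}(UA_iU^{*})N^{-1}$, so that $\sum_i V_i^{*}V_i = I$, i.e.\ the stacked matrix $(V_1;\dots;V_m)$ is column orthonormal, and unwinding the definition of $V_i$ gives $A_i = U^{*}M^{-p_i/2}U \cdot U^{*}V_i N U = (U^{*}MU)^{-p_i/2}V_i N U$, which is exactly the factorization~(\ref{eq3}).

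For the converse, I would assume the factorization~(\ref{eq3}) holds with $N>0$, $U$ unitary, $M>UQU^{*}$ diagonal, $M-N^{2}=UQU^{*}$, and $(V_1;\dots;V_m)$ column orthonormal, and set $X=U^{*}MU$. Then I substitute directly into $X-\sum_i A_i^{*}X^{p_i}A_i$. Using $X^{p_i}=U^{*}M^{p_i}U$ and $A_i^{*}=U^{*}N V_i^{*}(U^{*}MU)^{-p_i/2}=U^{*}NV_i^{*}U M^{-p_i/2}U$, the middle factors collapse as $M^{-p_i/2}M^{p_i}M^{-p_i/2}=I$, so
\begin{equation*}
X-\sum_{i=1}^{m}A_i^{*}X^{p_i}A_i = U^{*}\Bigl(M-N\bigl(\textstyle\sum_{i=1}^{m}V_i^{*}V_i\bigr)N\Bigr)U = U^{*}(M-N^{2})U = U^{*}(UQU^{*})U = Q,
\end{equation*}
which shows $X=U^{*}MU$ solves Eq.~(\ref{eq1}) and is positive definite since $M>UQU^{*}\ge 0$ forces $M>0$.

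I do not expect any single step to be a serious obstacle; the computation is essentially the same linear algebra bookkeeping as in Theorem~\ref{thm1}, with the spectral decomposition of $X$ inserted. The one point that needs genuine care is verifying $M>UQU^{*}$ (hence that $N=(M-UQU^{*})^{1/2}$ is a well-defined positive definite square root): this is where the hypothesis that all $A_i$ are invertible is used, since it guarantees $\sum_i (UA_i^{*}U^{*})M^{p_i}(UA_iU^{*})$ is strictly positive definite rather than merely positive semi-definite. I would also make explicit at the start why $Q\in\mathcal H$ together with $X$ Hermitian is needed for $UQU^{*}$ to be Hermitian so that the square root and the inequality make sense. The rest is routine substitution and cancellation.
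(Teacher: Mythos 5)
Your proposal is correct and follows essentially the same route as the paper: diagonalize the solution as $X=U^{*}MU$, set $N=(M-UQU^{*})^{1/2}$ and $V_{i}=(U^{*}MU)^{p_{i}/2}A_{i}U^{*}N^{-1}$ so that $\sum_{i}V_{i}^{*}V_{i}=I$, and verify the converse by direct substitution; your explicit check that invertibility of the $A_{i}$ forces $M>UQU^{*}$ (so $N$ is well defined) is a point the paper leaves implicit. The only blemish is a unitary-factor slip in your definition $V_{i}=M^{p_{i}/2}(UA_{i}U^{*})N^{-1}$, which does not unwind to the stated factorization $A_{i}=(U^{*}MU)^{-p_{i}/2}V_{i}NU$ because $U$ and $M^{p_{i}/2}$ need not commute; replacing it by $V_{i}=(U^{*}MU)^{p_{i}/2}A_{i}U^{*}N^{-1}$ (which differs only by a left factor of $U^{*}$ and so leaves $V_{i}^{*}V_{i}$ unchanged) repairs this immediately.
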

\begin{proof}
If Eq.(\ref{eq1}) has a positive definite solution $X$, then $X$ can be factored as $X=U^{*}MU,$ where
$U$ is unitary and $M$ is diagonal. Therefore Eq.(\ref{eq1}) can be rewritten as
$$U^{*}MU-\sum\limits_{i=1}^{m}A_{i}^{*}(U^{*}MU)^{p_{i}}A_{i}=Q,$$ which implies that
$$M-UQU^{*}=\sum\limits_{i=1}^{m}UA_{i}^{*}(U^{*}MU)^{p_{i}}A_{i}U^{*}$$ and
\begin{equation} \label{eq4}                                      
\sum\limits_{i=1}^{m}(M-UQU^{*})^{-\frac{1}{2}}UA_{i}^{*}(U^{*}MU)^{p_{i}}A_{i}U^{*}(M-UQU^{*})^{-\frac{1}{2}}=I.
\end{equation}
Let $N=(M-UQU^{*})^{\frac{1}{2}}$ and $V_{i}=(U^{*}MU)^{\frac{p_{i}}{2}}A_{i}U^{*}(M-UQU^{*})^{-\frac{1}{2}}.$
Then $A_{i}=(U^{*}MU)^{-\frac{p_{i}}{2}}V_{i}NU$ and $M-N^{2}=UQU^{*}. $ Eq.(\ref{eq4}) turns into
$\sum\limits_{i=1}^{m}V_{i}^{*}V_{i}=I,$ which means that $\left(\begin{array}{c}
V_{1}\\
 V_{2}\\
  \vdots \\
 V_{m} \\
  \end{array}
   \right)
  $ is column orthonormal.

  Conversely, suppose that $A_{i}$ has the decomposition (\ref{eq3}). Let $X=U^{*}MU.$ Then
  \begin{eqnarray*}
X-\sum\limits_{i=1}^{m}A_{i}^{*}X^{p_{i}}A_{i}&=&U^{*}MU-
\sum\limits_{i=1}^{m}U^{*}N^{*}V_{i}^{*}(U^{*}MU)^{-\frac{p_{i}}{2}}(U^{*}MU)^{p_{i}}(U^{*}MU)^{-\frac{p_{i}}{2}}V_{i}NU\\
&=& U^{*}MU-
\sum\limits_{i=1}^{m}U^{*}N^{*}V_{i}^{*}V_{i}NU \\
&=&  U^{*}(M-N^{2})U=Q,
  \end{eqnarray*} that is $X=U^{*}MU$ is a solution of Eq.(\ref{eq1}).
\end{proof}

\begin{theorem}\label{thm3}                                      
If $X$ is a solution of Eq.(\ref{eq1}) with $0<p_{i}<1$, then $$X\geq\left(\lambda_{\min}(Q)+
\sum\limits_{i=1}^{m}\displaystyle\lambda_{\min(A_{i}^{*}A_{i})}\lambda_{\min}^{p_{i}}(Q)\right)I= \beta I.$$
\end{theorem}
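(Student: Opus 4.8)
The plan is to exploit the obvious lower bound $X > Q$ that follows directly from the equation, and then bootstrap it by plugging this inequality into the sum. First I would observe that since each term $A_i^*X^{p_i}A_i$ is positive semidefinite, rearranging Eq.~(\ref{eq1}) gives $X = Q + \sum_{i=1}^m A_i^*X^{p_i}A_i \geq Q$. In particular $X \geq \lambda_{\min}(Q) I > 0$, so all fractional powers below are well-defined and monotone.

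Next I would use this crude bound to estimate each summand from below. Since $0 < p_i < 1$ and $X \geq \lambda_{\min}(Q) I$, Lemma~\ref{lem1} (Löwner monotonicity of $t \mapsto t^{p_i}$) yields $X^{p_i} \geq \lambda_{\min}^{p_i}(Q) I$. Therefore
\[
A_i^* X^{p_i} A_i \;\geq\; \lambda_{\min}^{p_i}(Q)\, A_i^* A_i \;\geq\; \lambda_{\min}^{p_i}(Q)\,\lambda_{\min}(A_i^* A_i)\, I,
\]
where the last step is the standard inequality $B^*B \geq \lambda_{\min}(B^*B) I$. Summing over $i$ and adding $Q \geq \lambda_{\min}(Q) I$ to the identity $X = Q + \sum_{i=1}^m A_i^* X^{p_i} A_i$ gives
\[
X \;\geq\; \Bigl(\lambda_{\min}(Q) + \sum_{i=1}^m \lambda_{\min}(A_i^* A_i)\,\lambda_{\min}^{p_i}(Q)\Bigr) I \;=\; \beta I,
\]
which is the claimed estimate.

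There is essentially no hard step here; the argument is a one-pass bootstrap. The only point requiring a little care is the justification of $X^{p_i} \geq \lambda_{\min}^{p_i}(Q) I$, which I would handle by noting $X \geq \lambda_{\min}(Q) I = (\lambda_{\min}(Q) I)$ with both sides positive definite, so Lemma~\ref{lem1} applies with $A = X$, $B = \lambda_{\min}(Q) I$ and $\gamma = p_i \in (0,1)$, using that $(\lambda_{\min}(Q) I)^{p_i} = \lambda_{\min}^{p_i}(Q) I$. Everything else is monotonicity of the Löwner order under congruence $B \mapsto A_i^* B A_i$ and under addition. If one wanted a sharper bound one could iterate (replace $\lambda_{\min}(Q)$ by $\beta$ and repeat), but the statement as given only asks for the first iterate, so I would stop here.
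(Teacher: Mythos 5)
Your proposal is correct and follows essentially the same route as the paper: both establish $X \geq Q$ from positivity of the summands, apply Lemma~\ref{lem1} (L\"owner--Heinz) to push the bound through the fractional power, and then pass to minimal eigenvalues. The only cosmetic difference is that the paper applies Lemma~\ref{lem1} to get $X^{p_i} \geq Q^{p_i}$ before scalarizing, whereas you scalarize first via $X \geq \lambda_{\min}(Q)I$; the two orderings are interchangeable.
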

\begin{proof}
By Lemma \ref{lem3}, Eq.(\ref{eq1}) with $0<p_{i}<1$ always has a unique positive definite solution $X$. Then $X>0$, it follows that $X^{p_{i}}>0.$ Therefore $X\geq Q.$ By Lemma \ref{lem1} and Eq.(\ref{eq1}), we have $X\geq Q+\sum\limits_{i=1}^{m}A_{i}^{*}Q^{p_{i}}A_{i}\geq \left(\lambda_{min}(Q)+\sum\limits_{i=1}^{m}\displaystyle\lambda_{\min(A_{i}^{*}A_{i})}\lambda_{\min}^{p_{i}}(Q)\right)I=\beta I.$
\end{proof}

\section{Perturbation bounds of Eq.(\ref{eq1}) with $0<p_{i}<1$}

Here the perturbed equation
\begin{equation}\label{eq7}                                                                
\widetilde{X}-\sum\limits_{i=1}^{m}\widetilde{A_{i}}^*\widetilde{X}^{p_{i}}\widetilde{A_{i}}=\widetilde{Q},
\;\;0<p_{i}<1,
\end{equation}
is considered,
where $\widetilde{A_{i}}$ and $\widetilde{Q}$ are  small perturbations
of $A_{i}$ and $Q$ in Eq.(\ref{eq1}), respectively. We assume that $X$
and $\widetilde{X}$ are the solutions of Eq.(\ref{eq1}) and
Eq.(\ref{eq7}), respectively. Let $\Delta X=\widetilde{X}-X$, $\Delta Q=\widetilde{Q}-Q$ and $\Delta A_{i}=\widetilde{A_{i}}-A_{i}$.

In this section two perturbation bounds for the
solution of Eq.(\ref{eq1})  with $0<p_{i}<1$ are developed. The relative perturbation bound in
Theorem \ref{thm6} does not depend any knowledge of the actual solution $X$
of Eq.(\ref{eq1}). Furthermore, a sharper perturbation bound in
Theorem \ref{thm9} is derived.

The next theorem generalizes Theorem 4 in \cite{s38}
with $m=1$, $\|\Delta Q\|=0$ to arbitrary integer $m\geq 1,$ $\|\Delta Q\|>0$.

\begin{theorem}\label{thm6}                           
 Let $b=\beta+\|\Delta Q\|-\sum\limits_{i=1}^{m}(1-{p_{i}})\beta^{p_{i}}\|A_{i}\|^{2},
s=\sum\limits_{i=1}^{m}\beta^{p_{i}}\|\Delta A_{i}\|\;(2\|A_{i}\|+\|\Delta A_{i}\|)$. If
\begin{equation}\label{eq8}                              
0<b<2(\beta-s) \;\;{and} \;\;b^{2}-4(\beta-s)(s+\|\Delta Q\|)\geq 0,
 \end{equation}
then
\begin{equation}\label{eq25}                            
\frac{\|\widetilde{X}-X\|}{\|X\|}\,\leq
\varrho\,\sum\limits_{i=1}^{m}\|\Delta A_{i}\|+\omega \|\Delta Q\|\equiv \xi_{1},
\end{equation}
where
$$\varrho=\displaystyle\frac{2s}{\sum\limits_{i=1}^{m}\|\Delta A_{i}\|(b+\sqrt{b^{2}-4(\beta-s)(s+\|\Delta Q\|)})},\;\;\;\;\omega=\frac{2}{b+\sqrt{b^{2}-4(\beta-s)(s+\|\Delta Q\|)}}.$$

\end{theorem}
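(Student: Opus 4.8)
The plan is to construct a closed ball of matrices on which a suitable map has a fixed point that must coincide with $\widetilde X$, and then read off the radius of that ball as the desired bound. First I would rewrite the perturbed equation $\widetilde X = \widetilde Q + \sum_i \widetilde A_i^* \widetilde X^{p_i}\widetilde A_i$ and subtract the unperturbed one, obtaining
$$\Delta X = \Delta Q + \sum_{i=1}^m\Bigl(\widetilde A_i^*\widetilde X^{p_i}\widetilde A_i - A_i^* X^{p_i} A_i\Bigr).$$
I would split the $i$-th summand as $\widetilde A_i^*(\widetilde X^{p_i}-X^{p_i})\widetilde A_i + (\widetilde A_i^* X^{p_i}\widetilde A_i - A_i^* X^{p_i}A_i)$, the second piece further as $\Delta A_i^* X^{p_i}\widetilde A_i + A_i^* X^{p_i}\Delta A_i$. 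This suggests defining $\Phi(\Delta X) = \Delta Q + \sum_i\bigl(\widetilde A_i^*((X+\Delta X)^{p_i}-X^{p_i})\widetilde A_i + \Delta A_i^* X^{p_i}\widetilde A_i + A_i^* X^{p_i}\Delta A_i\bigr)$ on Hermitian $\Delta X$, and showing $\Phi$ maps a ball $\{\|\Delta X\|\le \xi_1\|X\|\}$ into itself; since $\widetilde X$ is the \emph{unique} solution (Lemma \ref{lem3}), the fixed point of $\Phi$ is exactly $\Delta X=\widetilde X - X$, so any invariant ball bounds $\|\Delta X\|$.

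Next I would estimate $\|\Phi(\Delta X)\|$ for $\|\Delta X\|\le r$ with $r\le(1-1/\nu)\beta$ for a suitable $\nu$, so that the hypothesis $X+\Delta X\ge(1/\nu)X>0$ of Lemma \ref{lem2}(iii) holds (using $X\ge\beta I$ from Theorem \ref{thm3}). For the term $\|\widetilde A_i^*((X+\Delta X)^{p_i}-X^{p_i})\widetilde A_i\|$ I would apply Lemma \ref{lem2}(iii) after multiplying out $X^{-1/2}$ factors, bounding $\|X^{-1/2}\Delta X X^{-1/2}\|\le r/\beta$ and $\|X^{p_i/2}\widetilde A_i X^{-1/2}\|^2\le \beta^{p_i-1}\|\widetilde A_i\|^2\approx \beta^{p_i-1}\|A_i\|^2$ (to first order), yielding roughly $(1-p_i)\beta^{p_i-1}\|A_i\|^2\,r$ per term; summing gives the $\sum_i(1-p_i)\beta^{p_i}\|A_i\|^2\cdot(r/\beta)$ contribution that, moved to the left, produces the coefficient $b$. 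The perturbation-in-$A_i$ terms contribute $\sum_i \beta^{p_i}\|X^{p_i/2}\| \cdot(\text{stuff})$; tracking $\|\Delta A_i\|(2\|A_i\|+\|\Delta A_i\|)$ carefully is what produces $s$. Collecting everything, $\|\Phi(\Delta X)\|\le \|\Delta Q\| + s + \bigl(\sum_i(1-p_i)\beta^{p_i-1}\|A_i\|^2\bigr)r + \frac{s}{\beta}r + O(r^2/\beta)$, and I would show this is $\le r$ precisely when $r$ lies between the two roots of the quadratic $(\beta-s)r^2 - b\beta r + \beta(s+\|\Delta Q\|)\le 0$; the smaller root, after dividing by $\|X\|\ge\beta$ and bounding $\|X\|$ appropriately, is the claimed $\xi_1$, and condition (\ref{eq8}) is exactly the requirement that this quadratic have real roots and that the relevant root be positive and admissible (i.e.\ $b>0$ and the discriminant nonnegative, with $\beta-s>0$).

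The main obstacle I anticipate is twofold. First, the estimate $\|X^{p_i/2}\widetilde A_i X^{-1/2}\|^2 \le \beta^{p_i-1}\|\widetilde A_i\|^2$ is clean but replacing $\widetilde A_i$ by $A_i$ inside it (to get the stated $b$ and $s$ in terms of $\|A_i\|$ alone) requires care: one must either absorb the cross terms $\|\Delta A_i\|\cdot\|A_i\|$ into $s$ or argue they are lower order; getting the bookkeeping to land exactly on $s=\sum_i\beta^{p_i}\|\Delta A_i\|(2\|A_i\|+\|\Delta A_i\|)$ with no slack is the delicate part. Second, one must verify that the invariant-ball radius $r$ we choose automatically satisfies $r\le(1-1/\nu)\beta$ for whatever $\nu$ makes Lemma \ref{lem2}(iii) applicable — i.e.\ that condition (\ref{eq8}) already forces the smaller root to be small enough; this is a consistency check that, if it fails, would require strengthening the hypotheses. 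Once these are handled, the contraction/self-map argument and the quadratic-root extraction are routine, and the final division by $\|X\|$ (using $\|X\|\ge\beta$ to turn the absolute bound on $\|\Delta X\|$ into a relative one) completes the proof.
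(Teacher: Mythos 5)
Your skeleton is the right one and matches the paper's: set up a map $f(\Delta X)=\Delta Q+\sum_i\bigl(\widetilde A_i^*(X+\Delta X)^{p_i}\widetilde A_i-A_i^*X^{p_i}A_i\bigr)$ on a closed ball, show it is a self-map using Lemma \ref{lem2}(iii) and $X\geq\beta I$, invoke Brouwer plus the uniqueness from Lemma \ref{lem3} to identify the fixed point with $\widetilde X-X$, and recognize $\xi_1$ as the smaller root of a quadratic whose admissibility is exactly condition (\ref{eq8}). Your consistency check also resolves the way the paper does: (\ref{eq8}) forces $\xi_1\leq 2(s+\|\Delta Q\|)/b\leq b/(2(\beta-s))<1$, so $X+\Delta X\geq(1-\xi_1)X>0$ and Lemma \ref{lem2}(iii) applies with $\nu=1/(1-\xi_1)$.

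The genuine gap is your choice of ball. You work with the unweighted ball $\{\|\Delta X\|\leq r\}$ and plan to ``divide by $\|X\|$'' at the end, but the first obstacle you flag is not merely delicate in that norm --- it fails. The perturbation-in-$A_i$ terms are $\Delta A_i^*(X+\Delta X)^{p_i}(A_i+\Delta A_i)+A_i^*(X+\Delta X)^{p_i}\Delta A_i$, and in the unweighted norm their size is governed by $\|(X+\Delta X)^{p_i}\|\approx\|X\|^{p_i}=\lambda_{\max}(X)^{p_i}$, which is \emph{not} bounded above by $\beta^{p_i}$ (the lower bound $X\geq\beta I$ goes the wrong way), so you cannot land on $s=\sum_i\beta^{p_i}\|\Delta A_i\|(2\|A_i\|+\|\Delta A_i\|)$; likewise Lemma \ref{lem2}(iii) returns a bound on $\|X^{-1/2}(\cdot)X^{-1/2}\|$, and converting it back to $\|\cdot\|$ costs a factor $\|X\|$, polluting every coefficient with $\|X\|/\beta$. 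The paper's fix is to take the ball $\Omega=\{\Delta X:\|X^{-1/2}\Delta X X^{-1/2}\|\leq\xi_1\}$ and estimate $\|X^{-1/2}f(\Delta X)X^{-1/2}\|$ throughout: then the sandwich $(1-\xi_1)X\leq X+\Delta X\leq(1+\xi_1)X$ is immediate, the $A_i$-perturbation term becomes $\|X^{p_i/2}(A_i+\Delta A_i)X^{-1/2}\|\,\|X^{-p_i/2}(X+\Delta X)^{p_i}X^{-p_i/2}\|\,\|X^{-1/2}\Delta A_i^*X^{p_i/2}\|\leq\beta^{p_i-1}(1+\xi_1)\|\Delta A_i\|(2\|A_i\|+\|\Delta A_i\|)$ because the exponents $p_i-1<0$ make the lower bound $\beta$ usable as an upper bound, and the final relative estimate comes for free from $\|\Delta X\|/\|X\|\leq\|X^{-1/2}\Delta X X^{-1/2}\|$ with no division by $\beta$ and no appearance of $\|X\|$. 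Rewriting your argument entirely in the weighted quantity $\|X^{-1/2}\Delta X X^{-1/2}\|$ (and with the quadratic $(\beta-s)t^2-bt+(s+\|\Delta Q\|)=0$ in that variable, not the one you wrote) closes the gap.
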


\begin{proof}  Let
$$\Omega=\{\Delta X \in\mathcal{H}^{n\times n}:\,\,\|
X^{-1/2}\Delta X X^{-1/2} \|\leq \varrho\,\sum\limits_{i=1}^{m}\|\Delta
A_{i}\|+\omega\|\Delta Q\|\;\}.$$ Obviously, $\Omega$ is a nonempty
bounded convex closed set. Let $$f(\Delta
X)=\sum\limits_{i=1}^{m}(\widetilde{A_{i}}^*(X+\Delta
X)^{p_{i}}\widetilde{A_{i}}-A_{i}^*X^{p_{i}}A_{i})+\Delta Q,\;\;\Delta
X\in\Omega.$$ Evidently, $f: \Omega\mapsto\mathcal{H}^{n\times n}$
is continuous. We will prove that $f(\Omega)\subseteq\Omega$.

For every $\Delta X\in\Omega,$ it follows  $\|X^{-1/2}\Delta XX^{-1/2}\|\leq
\varrho\,\sum\limits_{i=1}^{m}\|\Delta A_{i}\|+\omega \|\Delta Q\|.$ Thus
$$(\varrho \,\sum\limits_{i=1}^{m}\|\Delta A_{i}\|+\omega\|\Delta Q\| )I\geq X^{-1/2}\Delta XX^{-1/2}\geq (-\varrho \,\sum\limits_{i=1}^{m}\|\Delta A_{i}\|-\omega\|\Delta Q\| )I,$$
$$(1+\varrho \sum\limits_{i=1}^{m}\|\Delta A_{i}\|+\omega\Delta Q\|\|)X\geq X+\Delta X\geq (1-\varrho \sum\limits_{i=1}^{m}\|\Delta A_{i}\|-\omega\Delta Q\|\|)X.$$

According to (\ref{eq8}) and (\ref{eq25}), we have $$\varrho\sum\limits_{i=1}^{m}\|\Delta A_{i}\|+\omega\|\Delta Q\|=\frac{2(\|\Delta Q\|+s)}{b+\sqrt{b^{2}-4(\beta-s)(s+\|\Delta Q\|)}}\leq \frac{2(\|\Delta Q\|+s)}{b}\leq\frac{b}{2(\beta-s)}<1.$$ Therefore $$(1-\varrho\sum\limits_{i=1}^{m}\|\Delta A_{i}\|-\omega\|\Delta Q\|)X>0.$$
From Lemma \ref{lem2} and Theorem \ref{thm3}, it
follows that
\begin{eqnarray}\label{eq16}                                                         
&&\left \| X^{-\frac{1}{2}}\left[\sum\limits_{i=1}^{m}A_{i}^*\left((X+\Delta
X\right)^{p_{i}}-X^{p_{i}})A_{i}\right]X^{-\frac{1}{2}}\right\|\nonumber \\
&\leq &\left(\| X^{-\frac{1}{2}}\Delta X X^{-\frac{1}{2}}\| +
\frac{\|X^{-\frac{1}{2}}\Delta X
X^{-\frac{1}{2}}\|^{2}}{1-\varrho\;\sum\limits_{i=1}^{m}\|\Delta
A_{i}\|-\omega\|\Delta Q\|}\right)(\sum\limits_{i=1}^{m}(1-p_{i})\|X^{\frac{p_{i}}{2}}A_{i}X^{-\frac{1}{2}}\|^{2})\\
&\leq &\left(\| X^{-\frac{1}{2}}\Delta X X^{-\frac{1}{2}}\| +
\frac{\|X^{-\frac{1}{2}}\Delta X
X^{-\frac{1}{2}}\|^{2}}{1-\varrho\;\sum\limits_{i=1}^{m}\|\Delta
A_{i}\|-\omega\|\Delta Q\|}\right)(\sum\limits_{i=1}^{m}\frac{1-p_{i}}{\beta^{1-p_{i}}}\|A_{i}\|^{2}).\nonumber
\end{eqnarray}

Therefore
\begin{eqnarray*}
&&\left\| X^{-\frac{1}{2}}f(\Delta
X)X^{-\frac{1}{2}}\right\|\\
&=&\left\| X^{-\frac{1}{2}}\left[\sum\limits_{i=1}^{m}\widetilde{A_{i}}^*(X+\Delta
X)^{p_{i}}\widetilde{A_{i}}-A_{i}^{*}X^{p_{i}}A_{i}\right]X^{-\frac{1}{2}}+X^{-\frac{1}{2}}\Delta QX^{-\frac{1}{2}}\right\|\\
&\leq &\left\| \sum\limits_{i=1}^{m}X^{-\frac{1}{2}}A_{i}^*((X+\Delta
X)^{p_{i}}-X^{p})A_{i}X^{-\frac{1}{2}}\right\|+\|X^{-\frac{1}{2}}\Delta QX^{-\frac{1}{2}}\|\\
&&+\left\|\sum\limits_{i=1}^{m}X^{-\frac{1}{2}}\left[\Delta A_{i}^{*}(X+\Delta X)^{p_{i}}(A_{i}+\Delta A_{i})+A_{i}^{*}(X+\Delta X)^{p_{i}}\Delta A_{i}\right]X^{-\frac{1}{2}}\right\|
\\
&\leq & \left(\| X^{-\frac{1}{2}}\Delta X
X^{-\frac{1}{2}}\| +
\frac{\|X^{-\frac{1}{2}}\Delta
X X^{-\frac{1}{2}}\|^{2}}{1-\varrho\sum\limits_{i=1}^{m}\|\Delta
A_{i}\|-\omega\|\Delta Q\|}\right)\left(\sum\limits_{i=1}^{m}\frac{1-p_{i}}{\beta^{1-p_{i}}}\|A_{i}\|^{2}\right)\\
&&+\sum\limits_{i=1}^{m}\frac{\|\Delta A_{i}\|(2\| A_{i}\|+\|\Delta
A_{i}\|)}{\beta^{1-p_{i}}}(1+\varrho\sum\limits_{i=1}^{m}\|\Delta
A_{i}\|+\omega\|\Delta Q\|)+\frac{\|\Delta Q\|}{\beta}\\
&\leq & \left(\xi_{1}+\frac{\xi_{1}^{2}}{1-\xi_{1}}\right)\left(
\sum\limits_{i=1}^{m}\frac{1-p_{i}}{\beta^{1-p_{i}}}\|A_{i}\|^{2}\right)+
\frac{s}{\beta}(1+\xi_{1})+\frac{\|\Delta Q\|}{\beta}\\
&=&\xi_{1}.
\end{eqnarray*}
That is $f(\Omega)\subseteq\Omega.$ By Brouwer's fixed point theorem, there exists a $\Delta
X\in\Omega$ such that $f(\Delta X)=\Delta X$. Moreover, by Lemma
\ref{lem3}, we know that $X$ and $\widetilde{X}$ are the unique
solutions to Eq.(\ref{eq1}) and Eq.(\ref{eq7}), respectively. Then
\begin{eqnarray*}
\frac{\|\widetilde{X}-X\|}{\|X\|} & = & \frac{\|\Delta
X\|}{\|X\|}=\frac{\| X^{1/2}(X^{-1/2}\Delta X
X^{-1/2})X^{1/2}\|}{\|X\|}\\
& \leq &\|X^{-1/2}\Delta X X^{-1/2}\|\leq \varrho\sum\limits_{i=1}^{m}\|\Delta
A_{i}\|+\omega\|\Delta Q\|.
\end{eqnarray*}
\end{proof}
\begin{rem}
With $$\varrho\sum\limits_{i=1}^{m}\|\Delta
A_{i}\|+\omega\|\Delta Q\|=\frac{2(\sum\limits_{i=1}^{m}\|\Delta A_{i}\|(2\|A_{i}\|+\|\Delta A_{i}\|)+\|\Delta Q\|)}{b+\sqrt{b^{2}-4(\beta-s)(s+\|\Delta Q\|)}},$$ we get
$\varrho\sum\limits_{i=1}^{m}\|\Delta
A_{i}\|+\omega\|\Delta Q\|\rightarrow 0$ for $\Delta Q\rightarrow 0,$ $\|\Delta A_{i}\|\rightarrow 0\;(i=1,2, \cdots, m).$ Therefore Eq.(\ref{eq1}) is well-posed.
\end{rem}

Next, a sharper perturbation estimate is derived.

Subtracting (\ref{eq1}) from (\ref{eq7}) we have
\begin{equation}\label{eq18}                                 
  \Delta X+\sum\limits_{i=1}^{m}\frac{\sin p_{i}\pi}{\pi}\int^{\infty}_{0}[(\lambda I+X
)^{-1}X^{\frac{1}{2}}A_{i}]^{*}\Delta X[(\lambda I+X
)^{-1}X^{\frac{1}{2}}A_{i}]\lambda^{p_{i}-1}d\lambda=E+h(\Delta X),
\end{equation}
where
\begin{eqnarray}\label{eq5}                                                                     
&& B_{i}=X^{p_{i}}A_{i}, \;\;i=1, 2, \cdots, m, \nonumber\\
&& E=\sum\limits_{i=1}^{m}(B_{i}^{*}\Delta A_{i}+\Delta A_{i}^{*}B_{i})
+\sum\limits_{i=1}^{m}\Delta A_{i}^{*}X^{p_{i}}\Delta A_{i}+\Delta Q, \nonumber\\
&&h(\Delta X)=\sum\limits_{i=1}^{m}\left[A_{i}^{*}Z_{i}(\Delta X)A_{i}-\widetilde{A}_{i}^{*}V_{i}(\Delta X)\Delta A_{i}
-\Delta A_{i}^{*}V_{i}(\Delta X)A_{i}\right],\\
&&Z_{i}(\Delta X)=\frac{\sin p_{i}\pi}{\pi}\int^{\infty}_{0}X^{\frac{1}{2}}(\lambda I+X)^{-1}\Delta X(\lambda I+X+\Delta X)^{-1}\Delta X (\lambda I+X)^{-1}X^{\frac{1}{2}}\lambda^{p_{i}-1}d\lambda,\nonumber\\
&& V_{i}(\Delta X)=\frac{\sin p_{i}\pi}{\pi}\int^{\infty}_{0}X^{\frac{1}{2}}(\lambda I+X)^{-1}\Delta X(\lambda I+X+\Delta X)^{-1}X^{\frac{1}{2}}\lambda^{p_{i}-1}d\lambda.\nonumber
\end{eqnarray}

\begin{lemma}  \label{lem5}                                                
Let $\sum\limits_{i=1}^{m}\displaystyle\frac{\|A_{i}\|^{2}}{\beta^{1-p_{i}}}<1.$ Then the linear operator $\mathbf{L}:\mathcal{H}^{n\times
n}\rightarrow\mathcal{H}^{n\times n}$ defined by
\begin{equation}\label{eq24}
 \mathbf{L}W=W+\sum\limits_{i=1}^{m}\frac{\sin p_{i}\pi}{\pi}\int^{\infty}_{0}[(\lambda I+X
)^{-1}X^{\frac{1}{2}}A_{i}]^{*}W[(\lambda I+X )^{-1}X^{\frac{1}{2}}A_{i}]\lambda^{p_{i}-1}\emph{d}\lambda,
\;\;\;W\in\mathcal{H}^{n\times n}.
 \end{equation} is invertible.
\end{lemma}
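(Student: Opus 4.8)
The plan is to show that $\mathbf{L}$ is a bounded linear operator on the finite-dimensional space $\mathcal{H}^{n\times n}$ which differs from the identity by an operator of norm strictly less than one; then invertibility follows from the Neumann series. Write $\mathbf{L} = \mathbf{I} + \mathbf{P}$, where $\mathbf{P}W = \sum_{i=1}^{m}\frac{\sin p_i\pi}{\pi}\int_0^\infty [(\lambda I+X)^{-1}X^{1/2}A_i]^* W [(\lambda I+X)^{-1}X^{1/2}A_i]\lambda^{p_i-1}\,d\lambda$. First I would observe that $\mathbf{P}$ is a positive operator: for $W\geq 0$ each integrand is of the form $C_i^* W C_i \geq 0$ with $C_i = (\lambda I+X)^{-1}X^{1/2}A_i$, so $\mathbf{P}W\geq 0$. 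Consequently $\mathbf{L}$ is a positive-definite-preserving perturbation of the identity, and to get invertibility it suffices to bound $\|\mathbf{P}\|$ (the induced spectral-norm operator norm) by something less than $1$.

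Next I would estimate $\|\mathbf{P}W\|$ for $\|W\|\leq 1$. Using $\|C_i^* W C_i\|\leq \|W\|\,\|C_i\|^2 = \|W\|\,\|(\lambda I+X)^{-1}X^{1/2}A_i\|^2$ and submultiplicativity, together with the triangle inequality for the integral, gives
\begin{equation*}
\|\mathbf{P}W\| \leq \|W\|\sum_{i=1}^{m}\frac{\sin p_i\pi}{\pi}\int_0^\infty \|(\lambda I+X)^{-1}X^{1/2}A_i\|^2 \lambda^{p_i-1}\,d\lambda.
\end{equation*}
The cleaner route, however, is to recognize the integral itself as a matrix expression via Lemma \ref{lem2}(i): taking $W=I$ we get $\mathbf{P}I = \sum_{i=1}^m A_i^* X^{p_i} A_i$ after using $\frac{\sin p_i\pi}{\pi}\int_0^\infty (\lambda I+X)^{-1}X(\lambda I+X)^{-1}\lambda^{p_i-1}d\lambda$; more directly, for general $W$ one has $\|C_i^*WC_i\|\leq \|W\|\,\|A_i^* X^{1/2}(\lambda I+X)^{-2}X^{1/2}A_i\|$, and integrating the scalar kernel against $\lambda^{p_i-1}$ reproduces $A_i^*X^{p_i}A_i$ up to the constant in Lemma \ref{lem2}. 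I would then bound $\|A_i^*X^{p_i}A_i\|\leq \|A_i\|^2\|X^{p_i}\| = \|A_i\|^2\|X\|^{p_i}$, but this is circular since it involves $X$; instead I use Theorem \ref{thm3} in the form $X\geq\beta I$ and $0<p_i<1$, which via Lemma \ref{lem1} does \emph{not} directly bound $X^{p_i}$ from above. The fix is to pull the estimate through $X^{-1/2}$: writing $\widehat W = X^{-1/2}\Delta X X^{-1/2}$ and estimating $\|X^{-1/2}A_i^*((\lambda I+X)^{-1}X W (\lambda I+X)^{-1})A_i X^{-1/2}\|$ allows Lemma \ref{lem2}(iii)-style bounds with $\|X^{p_i/2}A_i X^{-1/2}\|^2 \leq \|X^{(p_i-1)/2}\|^2\|A_i\|^2 \leq \beta^{p_i-1}\|A_i\|^2$, since $X\geq\beta I$ and $(p_i-1)/2<0$ gives $\|X^{(p_i-1)/2}\| = \lambda_{\min}(X)^{(p_i-1)/2}\leq \beta^{(p_i-1)/2}$.

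The main obstacle, and the reason the hypothesis $\sum_{i=1}^m \|A_i\|^2/\beta^{1-p_i} < 1$ appears, is that the operator norm estimate must be carried out in the weighted inner product $\langle U,V\rangle_X = \langle X^{-1/2}UX^{-1/2}, X^{-1/2}VX^{-1/2}\rangle$ rather than the plain one, because the kernel $(\lambda I+X)^{-1}X^{1/2}A_i$ naturally produces the scalar weight $\beta^{-(1-p_i)}$ only after conjugating by $X^{-1/2}$. So I would instead define $\widetilde{\mathbf{P}}\widehat W = X^{-1/2}(\mathbf{P}(X^{1/2}\widehat W X^{1/2}))X^{-1/2}$, show $\mathbf{L}$ is invertible iff $\mathbf{I}+\widetilde{\mathbf{P}}$ is, bound $\|\widetilde{\mathbf{P}}\| \leq \sum_{i=1}^m \|X^{p_i/2}A_iX^{-1/2}\|^2 \leq \sum_{i=1}^m \beta^{p_i-1}\|A_i\|^2 < 1$ using the partial-fraction identity $\frac{\sin p_i\pi}{\pi}\int_0^\infty \frac{\lambda^{p_i-1}}{(\lambda+t)^2}t\,d\lambda = (1-p_i)t^{p_i-1}\cdot\frac{t}{\text{(something)}}$—more precisely Lemma \ref{lem2}(ii) evaluated at the eigenvalues of $X$ shows the kernel integrates to at most $1$ in the appropriate sense—and conclude that $\mathbf{I}+\widetilde{\mathbf{P}}$ is invertible by Neumann series, hence so is $\mathbf{L}$. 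The delicate point is getting the constant exactly right so that the stated hypothesis $\sum_{i=1}^m\|A_i\|^2/\beta^{1-p_i}<1$ is precisely what forces $\|\widetilde{\mathbf{P}}\|<1$; this requires careful bookkeeping of the $\sin p_i\pi/\pi$ factor and the substitution in the integral, together with $\|X^{(p_i-1)/2}\|\leq\beta^{(p_i-1)/2}$ from Theorem \ref{thm3}.
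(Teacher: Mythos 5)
Your proposal is correct and follows essentially the same route as the paper: write $\mathbf{L}=\mathbf{I}+\mathbf{P}$, conjugate by $X^{-1/2}$ (the paper's substitution $Y=X^{-1/2}WX^{-1/2}$ is your weighted operator $\widetilde{\mathbf{P}}$), use Lemma \ref{lem2}(ii) to identify the integral kernel with $(1-p_{i})X^{p_{i}}$, bound $\|X^{p_{i}/2}A_{i}X^{-1/2}\|^{2}\leq\|A_{i}\|^{2}\beta^{p_{i}-1}$ via $X\geq\beta I$, and conclude by the Neumann series. The false starts in your write-up are discarded correctly, and the final argument matches the paper's proof.
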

\begin{proof} It suffices to show  that the following equation
 \begin{equation*}\label{eq9}
\mathbf{L}W=V
\end{equation*}
has a unique solution for every $V\in \mathcal{H}^{n\times n}$.
Define the operator $\mathbf{M}:\mathcal{H}^{n\times
n}\rightarrow\mathcal{H}^{n\times n}$ by
\begin{equation*}
 \mathbf{M}Z=\sum\limits_{i=1}^{m}\frac{\sin p_{i}\pi}{\pi}\!\int^{\infty}_{0}X^{-\frac{1}{2}}A_{i}^{*}X^{\frac{1}{2}}(\lambda I+X
)^{-1}X^{\frac{1}{2}}ZX^{\frac{1}{2}}(\lambda I+X
)^{-1}X^{\frac{1}{2}}A_{i}X^{-\frac{1}{2}}\lambda^{p_{i}-1}\textmd{d}\lambda,
\;Z\in\mathcal{H}^{n\times n}\!\!.
 \end{equation*}
 Let $Y=X^{-1/2}W X^{-1/2}$. Thus (\ref{eq9}) is equivalent to
$$
Y+\mathbf{M}Y=X^{-1/2}VX^{-1/2}.$$ According to Lemma \ref{lem2}, we
have
\begin{eqnarray*}
||\mathbf{M}Y||&\leq &\sum\limits_{i=1}^{m}\left\|\!\frac{\sin
p_{i}\pi}{\pi}\!\int^{\infty}_{0}\!\!\!\!\!X^{-\frac{1}{2}}A_{i}^{*}X^{\frac{1}{2}}(\lambda
I+X )^{-1}X(\lambda I+X
)^{-1}X^{\frac{1}{2}}A_{i}X^{-\frac{1}{2}}\lambda^{p_{i}-1}\textmd{d}\lambda\right\|||Y||\\
&\leq &\sum\limits_{i=1}^{m}(1-p_{i})||X^{\frac{p_{i}}{2}}A_{i}X^{-\frac{1}{2}}||^{2}||Y||\leq
\sum\limits_{i=1}^{m}\displaystyle\frac{\|A_{i}\|^{2}}{\beta^{1-p_{i}}}||Y||< ||Y||,
\end{eqnarray*}
which implies that $||\mathbf{M}||<1$ and $I+\mathbf{M}$ is
invertible. Therefore, the operator $\mathbf{L}$ is
invertible.\end{proof}

Furthermore, we define operators $ {\textbf P_{i}}:\mathcal{C}^{n\times n}\rightarrow\mathcal{H}^{n\times n}$ by

$${\textbf P_{i}}Z_{\;i}=\textbf{L}^{-1}(B_{i}^{*}Z_{\;i}+Z_{\;i}^{*}B_{i}),\;\;Z_{i}\in\mathcal{C}^{n\times n},\;\;i=1,2, \cdots, m.$$
Thus,we can rewrite (\ref{eq18}) as
\begin{equation}\label{eq19}                                                          
    \Delta X=\textbf{L}^{-1}\Delta Q+\sum\limits_{i=1}^{m}{\textbf P_{i}}\Delta A_{i}+
\textbf{L}^{-1}(\sum\limits_{i=1}^{m}\Delta A_{i}^{*}X^{p_{i}}\Delta A_{i})+\textbf{L}^{-1}(h(\Delta X)).
\end{equation}
Define
$$
||\mathbf{L}^{-1}||=\max_{\begin{array}{c}
W\in\mathcal{H}^{n\times n}\\
||W||=1
\end{array}}||\mathbf{L}^{-1}W||,\;\;\;\;
||\mathbf{P_{i}}||=\max_{\begin{array}{c}
Z\in\mathcal{C}^{n\times n}\\
||Z||=1
\end{array}}||\mathbf{P_{i}}Z||, \;\;i=1, 2, \cdots, m.
$$
 Now we denote
 \begin{eqnarray}\label{eq10}
l&=&\|\textbf{L}^{-1}\|^{-1},\;\;\zeta=\|X^{-1}\|,\;\;\xi_{i}=\|X^{p_{i}}\|,\;\;n_{i}=\|\textbf{P}_{i}\|,\;\;
\theta=\frac{\zeta^{2}}{l}\sum\limits_{i=1}^{m}\xi_{i}\|A_{i}\|^{2},\;\;i=1,2, \cdots, m,\nonumber\\
\epsilon&=& \frac{1}{l}\|\Delta Q\|+\sum\limits_{i=1}^{m}(n_{i}\|\Delta A_{i}\|+\frac{\xi_{i}}{l}\|\Delta A_{i}\|^{2}),\;\;\;\;\sigma\;\;=\;\;\frac{\zeta}{l}\sum\limits_{i=1}^{m}\xi_{i}(2\|A_{i}\|+\|\Delta A_{i}\|)\|\Delta A_{i}\|.
 \end{eqnarray}

  \begin{theorem}\label{thm9}                                         
  If
  \begin{equation}\label{eq20}                                             
   \sigma<1\;\;\mbox{and}\;\;\epsilon<\frac{(1-\sigma)^{2}}
   {\zeta+\sigma\zeta+2\theta+2\sqrt{(\zeta+\theta)(\sigma\zeta+\theta)})},
  \end{equation}
  then
  \begin{equation*}
    \|\widetilde{X}-X\|\leq\frac{2\epsilon}{1+\epsilon\zeta-\sigma+
    \sqrt{(1+\zeta\epsilon-\sigma)^{2}-4\epsilon(\zeta+\theta)}}\equiv\nu.
  \end{equation*}

 \end{theorem}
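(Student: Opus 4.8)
The plan is to set up a Brouwer fixed-point argument on the closed ball
$$\Omega=\{\,W\in\mathcal{H}^{n\times n}:\ \|W\|\le\nu\,\}$$
using the integral reformulation (\ref{eq19}) of the perturbed equation (\ref{eq7}). Put
$$g(W)=\mathbf{L}^{-1}\Delta Q+\sum_{i=1}^{m}\mathbf{P}_{i}\Delta A_{i}+\mathbf{L}^{-1}\Big(\sum_{i=1}^{m}\Delta A_{i}^{*}X^{p_{i}}\Delta A_{i}\Big)+\mathbf{L}^{-1}\big(h(W)\big),\qquad W\in\Omega ,$$
which maps $\mathcal{H}^{n\times n}$ into itself (for $W=W^{*}$ the kernels in (\ref{eq5}) are, after the $X^{1/2}$-sandwiching, Hermitian, since $(\lambda I+X)^{-1}W(\lambda I+X+W)^{-1}=(\lambda I+X)^{-1}-(\lambda I+X+W)^{-1}$). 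By (\ref{eq19}), $\Delta X=\widetilde{X}-X$ is a fixed point of $g$; conversely, unwinding the passage (\ref{eq7})$-$(\ref{eq1})$\Rightarrow$(\ref{eq18})$\Rightarrow$(\ref{eq19}) (each step being an exact identity when the relevant matrices are positive definite) shows that any fixed point $W_{\star}$ of $g$ in $\Omega$ satisfies $(X+W_{\star})-\sum_{i=1}^{m}\widetilde{A}_{i}^{*}(X+W_{\star})^{p_{i}}\widetilde{A}_{i}=\widetilde{Q}$, and since $X+W_{\star}\ge(1-\zeta\|W_{\star}\|)X>0$ there, $X+W_{\star}$ is a positive definite solution of (\ref{eq7}); by the uniqueness in Lemma \ref{lem3} it equals $\widetilde{X}$. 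Hence it is enough to prove $g(\Omega)\subseteq\Omega$ together with the continuity of $g$ on $\Omega$: Brouwer's theorem then produces a fixed point in $\Omega$, which gives $\|\widetilde{X}-X\|\le\nu$.

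First I would record the algebra behind $\nu$. Rationalizing the expression for $\nu$ shows that $\nu$ is the smaller root of $(\zeta+\theta)t^{2}-(1+\epsilon\zeta-\sigma)t+\epsilon=0$, and the identity
$$(\zeta+\sigma\zeta+2\theta)^{2}-\zeta^{2}(1-\sigma)^{2}=\big((\zeta+\theta)+(\sigma\zeta+\theta)\big)^{2}-\big((\zeta+\theta)-(\sigma\zeta+\theta)\big)^{2}=4(\zeta+\theta)(\sigma\zeta+\theta)$$
together with $\sigma<1$ turns hypothesis (\ref{eq20}) into exactly the statements $(1+\epsilon\zeta-\sigma)^{2}-4\epsilon(\zeta+\theta)>0$ and $\epsilon\zeta<1-\sigma$, so that $\nu$ is a well-defined positive number and, in particular, $\zeta\nu<1$. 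This last inequality is what legitimizes everything below: for $W\in\Omega$ one has $X+W\ge(1-\zeta\nu)X>0$, hence $\lambda I+X+W\ge(1-\zeta\|W\|)(\lambda I+X)$ and $(\lambda I+X+W)^{-1}\le(1-\zeta\|W\|)^{-1}(\lambda I+X)^{-1}$, which makes the integral representations of $Z_{i}(W)$ and $V_{i}(W)$ in (\ref{eq5}) meaningful. Finally, the defining quadratic rearranges to the identity used at the very end,
$$(\nu-\epsilon)(1-\zeta\nu)=\theta\nu^{2}+\sigma\nu .$$

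The proof of $g(\Omega)\subseteq\Omega$ splits into a ``data part'' and a ``remainder part''. The data part is immediate from the definitions in (\ref{eq10}), since $\|\mathbf{L}^{-1}\|=l^{-1}$, $\|\mathbf{P}_{i}\|=n_{i}$ and $\|X^{p_{i}}\|=\xi_{i}$:
$$\Big\|\mathbf{L}^{-1}\Delta Q+\sum_{i=1}^{m}\mathbf{P}_{i}\Delta A_{i}+\mathbf{L}^{-1}\Big(\sum_{i=1}^{m}\Delta A_{i}^{*}X^{p_{i}}\Delta A_{i}\Big)\Big\|\le\frac{\|\Delta Q\|}{l}+\sum_{i=1}^{m}n_{i}\|\Delta A_{i}\|+\frac{1}{l}\sum_{i=1}^{m}\xi_{i}\|\Delta A_{i}\|^{2}=\epsilon .$$
The remainder part is the heart of the argument: for $W\in\Omega$,
$$\|\mathbf{L}^{-1}h(W)\|\le\frac{1}{l}\|h(W)\|\le\frac{\theta\|W\|^{2}+\sigma\|W\|}{1-\zeta\|W\|}.$$
I would prove this term by term from (\ref{eq5}). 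In the integrand of $A_{i}^{*}Z_{i}(W)A_{i}$ one bounds the two outer factors by $\|(\lambda I+X)^{-1}X^{1/2}A_{i}\|\le\|A_{i}\|\,\|X^{1/2}(\lambda I+X)^{-1}\|$ with $\|X^{1/2}(\lambda I+X)^{-1}\|^{2}\le\|(\lambda I+X)^{-1}\|\le(\lambda+\lambda_{\min}(X))^{-1}$, and the middle factor by $\|W(\lambda I+X+W)^{-1}W\|\le(1-\zeta\|W\|)^{-1}\|W\|^{2}(\lambda+\lambda_{\min}(X))^{-1}$; the remaining $\lambda$-integral is evaluated with the Beta-function identity $\int_{0}^{\infty}\lambda^{p-1}(\lambda+a)^{-2}\,d\lambda=(1-p)\tfrac{\pi}{\sin p\pi}\,a^{\,p-2}$ (equivalently Lemma \ref{lem2}(ii)), and one uses $\lambda_{\min}(X)^{p_{i}-2}=\lambda_{\min}(X^{p_{i}})\|X^{-1}\|^{2}\le\xi_{i}\zeta^{2}$, which yields $\|A_{i}^{*}Z_{i}(W)A_{i}\|\le\zeta^{2}\xi_{i}\|A_{i}\|^{2}\|W\|^{2}(1-\zeta\|W\|)^{-1}$; summing and dividing by $l$ gives the $\theta\|W\|^{2}(1-\zeta\|W\|)^{-1}$ part. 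For the cross terms $\widetilde{A}_{i}^{*}V_{i}(W)\Delta A_{i}+\Delta A_{i}^{*}V_{i}(W)A_{i}$ the same chain of estimates (now with $\int_{0}^{\infty}\lambda^{p-1}(\lambda+a)^{-1}d\lambda=\tfrac{\pi}{\sin p\pi}a^{\,p-1}$ and $\lambda_{\min}(X)^{p_{i}-1}\le\xi_{i}\zeta$), together with $\|\widetilde{A}_{i}\|\le\|A_{i}\|+\|\Delta A_{i}\|$, bounds each such term by $\zeta\xi_{i}(2\|A_{i}\|+\|\Delta A_{i}\|)\|\Delta A_{i}\|\,\|W\|(1-\zeta\|W\|)^{-1}$, which sums to $\sigma\|W\|(1-\zeta\|W\|)^{-1}$.

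Putting the two estimates together, for $W\in\Omega$,
$$\|g(W)\|\le\epsilon+\frac{\theta\|W\|^{2}+\sigma\|W\|}{1-\zeta\|W\|}\le\epsilon+\frac{\theta\nu^{2}+\sigma\nu}{1-\zeta\nu}=\epsilon+(\nu-\epsilon)=\nu ,$$
where the middle inequality uses that $t\mapsto(\theta t^{2}+\sigma t)/(1-\zeta t)$ is increasing on $[0,1/\zeta)$ and the last equality is the identity from the second paragraph; thus $g(\Omega)\subseteq\Omega$. Continuity of $g$ on $\Omega$ follows because $\lambda I+X+W$ is bounded below uniformly in $W\in\Omega$, so the integrands defining $Z_{i}(W)$ and $V_{i}(W)$ depend continuously on $W$ and are dominated, whence $W\mapsto h(W)$ is continuous, while $\mathbf{L}^{-1}$ and the $\mathbf{P}_{i}$ are bounded linear. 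Brouwer's fixed-point theorem and the identification of the first paragraph then complete the proof. The \emph{main obstacle} is the remainder estimate: extracting exactly the constants $\theta$ and $\sigma$ from (\ref{eq5}) — in particular the precise powers of $\zeta=\|X^{-1}\|$ and the factor $\xi_{i}=\|X^{p_{i}}\|$ rather than a power of $\|X\|$, and carrying the factors $(1-\zeta\|W\|)^{-1}$ through so that they fit the identity $(\nu-\epsilon)(1-\zeta\nu)=\theta\nu^{2}+\sigma\nu$ — requires choosing the groupings in the integrands carefully and evaluating the relevant Beta-type integrals.
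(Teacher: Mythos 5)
Your proposal is correct and follows essentially the same route as the paper: the same fixed-point map built from (\ref{eq19}) on the ball $\{\|W\|\le\nu\}$, the same bounds $\|Z_{i}(W)\|\le\xi_{i}\zeta^{2}\|W\|^{2}/(1-\zeta\|W\|)$ and $\|V_{i}(W)\|\le\xi_{i}\zeta\|W\|/(1-\zeta\|W\|)$, the same quadratic $(\zeta+\theta)x^{2}-(1+\zeta\epsilon-\sigma)x+\epsilon=0$, and the same appeal to a fixed-point theorem plus uniqueness from Lemma \ref{lem3}. You merely supply more detail than the paper (explicit Beta-integral evaluations in place of citing Lemma \ref{lem2}, the discriminant verification, and the identification of the fixed point with $\widetilde{X}-X$), which is fine.
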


   \begin{proof}
   {Let \begin{equation}\label{eq12}                                         
   f(\Delta X)=\textbf{L}^{-1}\Delta Q+\sum\limits_{i=1}^{m}{\textbf P_{i}}\Delta A_{i}+
\textbf{L}^{-1}(\sum\limits_{i=1}^{m}\Delta A_{i}^{*}X^{p_{i}}\Delta A_{i})+\textbf{L}^{-1}(h(\Delta X)).
 \end{equation}
 Obviously, $f:\mathcal{H}^{n\times n}\rightarrow\mathcal{H}^{n\times n}$ is continuous. The condition (\ref{eq20}) ensures that the quadratic equation $(\zeta+\theta)x^{2}-(1+\zeta\epsilon-\sigma)x+\epsilon=0$ with respect to the variable $x$ has two positive real roots. The smaller one is
$$\nu=\frac{2\epsilon}{1+\epsilon\zeta-\sigma+
    \sqrt{(1+\zeta\epsilon-\sigma)^{2}-4\epsilon(\zeta+\theta)}}.$$ Define
    $\Omega=\{\Delta X\in \mathcal{H}^{n\times n}: \|\Delta X\|\leq \nu\}.$ Then for any $\Delta X\in \Omega,$ by (\ref{eq20}), we have
    \begin{eqnarray*}
   && ||X^{-1}\Delta X||\leq ||X^{-1}||||\Delta X||\leq\zeta\;\nu\leq
\zeta\cdot\frac{2\epsilon}{1+\zeta\epsilon-\sigma}\\
&&=
1+\frac{\zeta\epsilon+\sigma-1}{1+\zeta\epsilon-\sigma}\leq 1+\frac{-2(1-\sigma)(\zeta\sigma+\theta)}{(\zeta+\sigma\zeta+2\theta)(1+\zeta\epsilon-\sigma)}<1.
\end{eqnarray*}
It follows that $I-X^{-1}\Delta X$ is nonsingular and
$$\|I-X^{-1}\Delta X\|\leq\frac{1}{1-\|X^{-1}\Delta X\|}\leq\frac{1}{1-\zeta\|\Delta X\|}.$$
Using (\ref{eq5}) and Lemma \ref{lem2}, we have
\begin{eqnarray*}
\|Z_{i}(\Delta X)\|&\leq& (1-p_{i})\|\Delta X\|^{2}\|X^{-1}\|^{2}\|(I+X^{-1}\Delta X)^{-1}\|\|X^{p_{i}}\|
\leq \xi_{i}\zeta^{2}\frac{\|\Delta X\|^{2}}{1-\zeta\|\Delta X\|},\\
\|V_{i}(\Delta X)\|&\leq&\|X^{p_{i}}\|\|\Delta X\|\|X^{-1}\|\|(I+X^{-1}\Delta X)^{-1}\|\leq \xi_{i}\zeta\frac{\|\Delta X\|}{1-\zeta\|\Delta X\|},\\
\|h(\Delta X)\|&\leq &\sum\limits_{i=1}^{m}\left(\|A_{i}\|^{2}\|Z_{i}(\Delta X)\|+(2\|A_{i}\|+\|\Delta A_{i}\|)\|\Delta A_{i}\|\|V_{i}(\Delta X)\|\right)\\
&\leq & \sum\limits_{i=1}^{m}\left(\xi_{i}\zeta^{2}\|A_{i}\|^{2}\frac{\|\Delta X\|^{2}}{1-\zeta\|\Delta X\|}+(2\|A_{i}\|+\|\Delta A_{i}\|)\|\Delta A_{i}\|\xi_{i}\zeta\frac{\|\Delta X\|}{1-\zeta\|\Delta X\|}\right).
\end{eqnarray*}

Noting (\ref{eq10}) and (\ref{eq12}), it follows that
\begin{eqnarray*}
\|f(\Delta X)\|&\leq& \frac{1}{l}\|\Delta Q\|+\sum\limits_{i=1}^{m}(n_{i}\|\Delta A_{i}\|+\frac{\zeta_{i}}{l}\|\Delta A_{i}\|^{2})+\frac{1}{l}\|h(\Delta X)\|\\
&\leq &\epsilon+\frac{\sigma\|\Delta X\|}{1-\zeta\|\Delta X\|}+\frac{\theta\|\Delta X\|^{2}}{1-\zeta\|\Delta X\|}\\
&\leq &\epsilon+\frac{\sigma\nu}{1-\zeta\nu}+\frac{\theta\nu^{2}}{1-\zeta\nu}=\nu,
\end{eqnarray*}
for $\Delta X\in \Omega.$ That is $f(\Omega)\subseteq\Omega.$ According to Schauder fixed point theorem, there exists $\Delta X_{*}\in\Omega$ such that $f(\Delta X_{*})=X_{*}.$ It follows that $X+\Delta X_{*}$ is a Hermitian solution of Eq.(\ref{eq7}). By Lemma \ref{lem3}, we know that the solution of Eq.(\ref{eq7}) is unique. Then $\Delta X_{*}=\widetilde{X}-X$ and $\|\widetilde{X}-X\|\leq\xi_{3}.$
}\end{proof}

\begin{rem}   \label{rem3}                                                                   
From Theorem \ref{thm9}, we get the first order perturbation bound for the solution as follows:
\begin{eqnarray*}
&&\|\widetilde{X}-X\|\leq\frac{1}{l}\|\Delta Q\|+\sum\limits_{i=1}^{m}n_{i}\|\Delta A_{i}\|+O\left(\|(\Delta A_{1}, \Delta A_{2}, \cdots, \Delta A_{m}, \Delta Q)\|_{F}^{2}\right),\\\mbox{as}&&(\Delta A_{1}, \Delta A_{2}, \cdots, \Delta A_{m}, \Delta Q)\rightarrow 0.
\end{eqnarray*}
Combining this with (\ref{eq19}) gives
$$\Delta X=\textbf{L}^{-1}\Delta Q+{\textbf L}^{-1}\sum\limits_{i=1}^{m}(B_{i}^{*}\Delta A_{i}+\Delta A_{i}^{*}B_{i})+O\left(\|(\Delta A_{1}, \Delta A_{2}, \cdots, \Delta A_{m}, \Delta Q)\|_{F}^{2}\right).$$
as $\;\;(\Delta A_{1}, \Delta A_{2}, \cdots, \Delta A_{m}, \Delta Q)\rightarrow 0.$
\end{rem}
\section{Backward error of Eq.(\ref{eq1}) with $0<p_{i}<1$}
In this section, a  backward error of an approximate
solution for the unique solution to Eq. (\ref{eq1}) with $0<p_{i}<1$ is developed .

\begin{theorem}\label{thm7}                                                             
Let $\widetilde{X}>0$ be an approximation to the solution $X$ of
Eq.(\ref{eq1}). If $\Sigma=\sum\limits_{i=1}^{m}(1-p_{i})\|\widetilde{X}^{\frac{p_{i}}{2}}A_{i}\widetilde{X}^{-\frac{1}{2}}\|^{2}<1$ and the residual $R(\widetilde{X}) \equiv
Q+\sum\limits_{i=1}^{m}A_{i}^*\widetilde{X}^{p_{i}}A_{i}-\widetilde{X}$ satisfies
\begin{equation}\label{eq11}                                           
\|R(\widetilde{X})\| < \frac{\theta_{1}}{2\|\widetilde{X}^{-1}\|}
    \min\{1,\frac{\theta_{1}}{2}\},
    \;\;\mbox{where}\;\;\theta_{1}\equiv 1+\|\widetilde{X}^{-1}\|\|R(\widetilde{X})\|-\Sigma>0,
\end{equation}
then
$$\|\widetilde{X}-X\|\leq \mu\|R(\widetilde{X})\|,\;\;
\mbox{where}\;\;\mu=\frac{2\|\widetilde{X}\|\|\widetilde{X}^{-1}\|}{\theta_{1}+
\sqrt{\theta_{1}^{2}-4\|\widetilde{X}^{-1}\|\|R(\widetilde{X})\|}}.$$
\end{theorem}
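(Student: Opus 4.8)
The plan is to mimic the fixed-point argument used in Theorem \ref{thm6} and Theorem \ref{thm9}, but now centered at the approximate solution $\widetilde{X}$ rather than at the exact solution $X$. First I would rewrite the equation for the error. Since $X$ solves Eq.(\ref{eq1}) and $R(\widetilde{X})=Q+\sum_i A_i^*\widetilde{X}^{p_i}A_i-\widetilde{X}$, subtracting gives an identity of the form
\begin{equation*}
\widetilde{X}-X=R(\widetilde{X})+\sum\limits_{i=1}^{m}A_i^*\bigl(\widetilde{X}^{p_i}-X^{p_i}\bigr)A_i.
\end{equation*}
Writing $\Delta=\widetilde{X}-X$ and viewing $X=\widetilde{X}-\Delta$, I would define the map $g(\Delta)=R(\widetilde{X})+\sum_i A_i^*\bigl(\widetilde{X}^{p_i}-(\widetilde{X}-\Delta)^{p_i}\bigr)A_i$ on the set $\Omega=\{\Delta\in\mathcal{H}^{n\times n}:\|\Delta\|\le\mu\|R(\widetilde{X})\|\}$. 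The goal is to show $g(\Omega)\subseteq\Omega$; then Brouwer's (or Schauder's) fixed point theorem produces a fixed point $\Delta_*$, which by the uniqueness part of Lemma \ref{lem3} must equal $\widetilde{X}-X$, yielding the stated bound.

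The analytic core is estimating $\bigl\|\widetilde{X}^{-1/2}\sum_i A_i^*\bigl(\widetilde{X}^{p_i}-(\widetilde{X}-\Delta)^{p_i}\bigr)A_i\widetilde{X}^{-1/2}\bigr\|$. Here I would invoke Lemma \ref{lem2}(iii) with the base matrix taken to be $\widetilde{X}$ and the perturbation $-\Delta$: provided $\widetilde{X}-\Delta\ge(1/\nu)\widetilde{X}>0$ for a suitable $\nu$, one gets a bound of the form $(1-p_i)\bigl(\|\widetilde{X}^{-1/2}\Delta\widetilde{X}^{-1/2}\|+\nu\|\widetilde{X}^{-1/2}\Delta\widetilde{X}^{-1/2}\|^2\bigr)\|\widetilde{X}^{p_i/2}A_i\widetilde{X}^{-1/2}\|^2$. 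Summing over $i$ and using $\|\widetilde{X}^{-1/2}\Delta\widetilde{X}^{-1/2}\|\le\|\widetilde{X}^{-1}\|\|\Delta\|$ together with the quantity $\Sigma$ gives, after scaling back by $\widetilde{X}^{1/2}$ on both sides,
\begin{equation*}
\|g(\Delta)\|\le\|R(\widetilde{X})\|+\Bigl(\|\widetilde{X}^{-1}\|\|\Delta\|+\nu\|\widetilde{X}^{-1}\|^2\|\Delta\|^2\Bigr)\Sigma\cdot\|\widetilde{X}\|,
\end{equation*}
or more precisely a version where $\|\widetilde{X}\|\|\widetilde{X}^{-1}\|$ appears. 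Requiring this to be $\le\mu\|R(\widetilde{X})\|$ for all $\|\Delta\|\le\mu\|R(\widetilde{X})\|$ reduces to a quadratic inequality in $t:=\mu\|R(\widetilde{X})\|$ whose relevant root is exactly $\mu\|R(\widetilde{X})\|$ with $\mu=\dfrac{2\|\widetilde{X}\|\|\widetilde{X}^{-1}\|}{\theta_1+\sqrt{\theta_1^2-4\|\widetilde{X}^{-1}\|\|R(\widetilde{X})\|}}$; the condition (\ref{eq11}) is precisely what makes the discriminant nonnegative and keeps the root inside the region where $\widetilde{X}-\Delta>0$.

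The main obstacle I anticipate is bookkeeping the constant $\nu$ consistently: one must first show that on $\Omega$ the inequality $\widetilde{X}-\Delta\ge(1-\|\widetilde{X}^{-1}\|\mu\|R(\widetilde{X})\|)\widetilde{X}>0$ holds, so that $\nu=(1-\|\widetilde{X}^{-1}\|\mu\|R(\widetilde{X})\|)^{-1}$ is a valid (and finite) choice, and then verify that the resulting quadratic closes up to give exactly $\mu$. This is the same delicate interplay between the $\min\{1,\theta_1/2\}$ factor in (\ref{eq11}) and the self-consistency of the fixed-point radius that appears in Theorem \ref{thm6}; once the positivity of $\widetilde{X}-\Delta$ on $\Omega$ and the correct form of the quadratic are pinned down, the continuity of $g$ and the application of the fixed point theorem, followed by uniqueness from Lemma \ref{lem3}, finish the proof routinely.
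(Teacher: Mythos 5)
Your proposal follows essentially the same route as the paper: a fixed-point map $g(\Delta)=R(\widetilde{X})+\sum_i A_i^*[(\widetilde{X}+\Delta)^{p_i}-\widetilde{X}^{p_i}]A_i$ on a ball of radius $\mu\|R(\widetilde{X})\|$ (the paper measures the ball in the scaled norm $\|\widetilde{X}^{-1/2}\cdot\,\widetilde{X}^{-1/2}\|$ with radius $\mu\|R(\widetilde{X})\|/\|\widetilde{X}\|$, which is only a bookkeeping difference), the positivity check $\widetilde{X}+\Delta\ge(1-\theta_2\|R(\widetilde{X})\|)\widetilde{X}>0$, Lemma \ref{lem2}(iii) based at $\widetilde{X}$, Brouwer's theorem, and uniqueness from Lemma \ref{lem3}. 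The only thing to fix in the write-up is the sign convention in your error identity (it should read $\widetilde{X}-X=\sum_i A_i^*(\widetilde{X}^{p_i}-X^{p_i})A_i-R(\widetilde{X})$, so that the fixed point of $g$ really is $X-\widetilde{X}$); since only $\|R(\widetilde{X})\|$ enters the estimates, this does not affect the bound.
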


\begin{proof}

Let $$\Psi = \{\Delta X \in
\mathcal{H}^{n\times n}:\parallel \widetilde{X}^{-1/2}\Delta X
\widetilde{X}^{-1/2}\parallel\leq \theta_{2}
\|R(\widetilde{X})\|\},$$ where $\theta_{2}=\frac{\mu}{\|\widetilde{X}\|}.$ Obviously, $\Psi$ is a nonempty
bounded convex closed set. Let $$g(\Delta
X)=\sum\limits_{i=1}^{m}A_{i}^*\left[(\widetilde{X}+\Delta
X)^{p_{i}}-\widetilde{X}^{p_{i}}\right]A_{i}+R(\widetilde{X}).$$ Evidently $g:
\Psi\mapsto\mathcal{H}^{n\times n}$ is continuous. We will prove
that $g(\Psi)\subseteq\Psi$. For every $\Delta X\in\Psi,$ we have
$$\|\widetilde{X}^{-1/2}\Delta X\widetilde{X}^{-1/2}\|\leq \theta_{2}\|R(\widetilde{X})\|.$$ Hence
 $$\widetilde{X}^{-1/2}\Delta X\widetilde{X}^{-1/2}\geq -\theta_{2}\|R(\widetilde{X})\|I,$$ that is
 $$\widetilde{X}+\Delta X\geq (1-\theta_{2}\|R(\widetilde{X})\|)\widetilde{X}.$$ Using (\ref{eq11}), one sees
that
$$\theta_{2}\|R(\widetilde{X})\|=\frac{2\|\widetilde{X}^{-1}\|\|R(\widetilde{X})\|}{\theta_{1}+
\sqrt{\theta_{1}^{2}-4\|\widetilde{X}^{-1}\|\|R(\widetilde{X})\|}}<
\frac{2\|\widetilde{X}^{-1}\|\|R(\widetilde{X})\|}{\theta_{1}}<1.$$
Therefore, $(1-\theta_{2}\|R(\widetilde{X})\|)\widetilde{X}>0.$

According to (\ref{eq16}), we obtain
\begin{eqnarray*}
&&\parallel \widetilde{X}^{-\frac{1}{2}}g(\Delta
X)\widetilde{X}^{-\frac{1}{2}}\parallel\\
&\leq&\left(\|\widetilde{X}^{-\frac{1}{2}}\Delta X
\widetilde{X}^{-\frac{1}{2}}\|+\frac{\|\widetilde{X}^{-\frac{1}{2}}\Delta X
\widetilde{X}^{-\frac{1}{2}}\|^{2}}{1-\theta_{2}\|R(\widetilde{X})\|}\right)\Sigma+
\|\widetilde{X}^{-\frac{1}{2}}R(\widetilde{X})\widetilde{X}^{-\frac{1}{2}}\|\\
&\leq &\left(\theta_{2}\|R(\widetilde{X})\|+\frac{(\theta_{2}\|R(\widetilde{X})\|)^{2}}{1-\theta_{2}\|R(\widetilde{X})\|}\right)\Sigma
+\|\widetilde{X}^{-1}\|\|R(\widetilde{X})\|\\
&=&\theta_{2}\|R(\widetilde{X})\|.
\end{eqnarray*}
By Brouwer fixed point theorem, there exists a $\Delta X\in\Psi$
such that $g(\Delta X)=\Delta X.$ Hence $\widetilde{X}+\Delta X$
is a solution of Eq.(\ref{eq1}). Moreover, by Lemma \ref{lem3}, we
know that the solution $X$ of Eq.(\ref{eq1}) is unique. Then
\begin{equation*}
    \|\widetilde{X}-X\|=\|\Delta X\|\leq\|\widetilde{X}\|\|\widetilde{X}^{-\frac{1}{2}}\Delta X\widetilde{X}^{-\frac{1}{2}}\|
    =\theta_{2}\|\widetilde{X}\|\|R(\widetilde{X})\|.
\end{equation*}
\end{proof}

\section{Condition number}
In this section, we apply the theory of condition number developed by Rice \cite{s37}
to study condition numbers of the unique solution to Eq. (\ref{eq1}) with $0<p_{i}<1$.
\subsection{The complex case }

Suppose that $X$ and $\widetilde{X}$ are the solutions of the
matrix equations (\ref{eq1}) and (\ref{eq7}), respectively. Let
$\Delta A=\widetilde{A}-A$, $\Delta Q=\widetilde{Q}-Q$ and $\Delta
X=\widetilde{X}-X$. Using Theorem \ref{thm9} and Remark \ref{rem3}, we have

\begin{equation}\label{eq21}                                                                    
\Delta X=\widetilde{X}-X=\textbf{L}^{-1}\Delta Q+{\textbf L}^{-1}\sum\limits_{i=1}^{m}(B_{i}^{*}\Delta A_{i}+\Delta A_{i}^{*}B_{i})+O\left(\|(\Delta A_{1}, \Delta A_{2}, \cdots, \Delta A_{m}, \Delta Q)\|_{F}^{2}\right),
\end{equation}
as $(\Delta A_{1}, \Delta A_{2}, \cdots, \Delta A_{m}, \Delta Q)\rightarrow 0.$

By the theory of condition number developed by Rice \cite{s37}, we define
the condition number of the Hermitian positive definite solution
$X$ to (\ref{eq1}) by
\begin{equation}  \label{eq22}                                               
c(X)=\lim_{\delta\rightarrow 0}\sup_{||(\frac{\Delta
A_{1}}{\eta_{1}}, \frac{\Delta
A_{2}}{\eta_{2}}, \cdots, \frac{\Delta
A_{m}}{\eta_{m}},
\frac{\Delta Q}{\rho})||_{F}\leq\delta}\frac{||\Delta
X||_{F}}{\xi\delta},
\end{equation}
where $\xi$,  $\rho$ and $\eta_{i},$ $i=1,2, \cdots, m$ are positive parameters. Taking
$\xi=\eta_{i}=\rho=1$  in (\ref{eq22}) gives the absolute condition number $c_{abs}(X)$,
and taking $\xi=||X||_{F}$, $\eta_{i}=||A_{i}||_{F}$  and $\rho=||Q||_{F}$ in (\ref{eq22})
gives the relative condition number $c_{rel}(X)$.

Substituting (\ref{eq21}) into (\ref{eq22}), we get
\begin{eqnarray*}  \label{eq23}                                             
c(X)&=&\frac{1}{\xi}\!\!\!\max_{\begin{array}{c}(\frac{\Delta
A_{1}}{\eta_{1}}, \frac{\Delta
A_{2}}{\eta_{2}}, \cdots, \frac{\Delta
A_{m}}{\eta_{m}},
\frac{\Delta Q }{\rho})\neq 0\\
 \Delta A_{i}\in\mathcal{C}^{n\times
n}, \Delta Q\in\mathcal{H}^{n\times n}
\end{array}}
\!\!\!\!\!\!\!\!\frac{||\mathbf{L}^{-1}(\Delta Q+\sum\limits_{i=1}^{m}(B_{i}^{*}\Delta
A_{i}+\Delta A_{i}^{*}B_{i})) ||_{F}}{||(\frac{\Delta
A_{1}}{\eta_{1}}, \frac{\Delta
A_{2}}{\eta_{2}}, \cdots, \frac{\Delta
A_{m}}{\eta_{m}},, \frac{\Delta
Q}{\rho})||_{F}}\nonumber\\
&=&\frac{1}{\xi}\!\!\!\max_{\begin{array}{c}(E_{1}, E_{2}, \cdots, E_{m},
H)\neq 0\\
 E_{i}\in\mathcal{C}^{n\times
n}, H\in\mathcal{H}^{n\times n}
\end{array}}
\!\!\!\!\!\!\!\!\frac{||\mathbf{L}^{-1}(\rho H+\sum\limits_{i=1}^{m}\eta_{i}(B_{i}^{*}E_{i}+E_{i}^{*}B_{i}))
||_{F}}{||(E_{1}, E_{2}, \cdots, E_{m}, H)||_{F}}.
\end{eqnarray*}
Let $L$ be the matrix representation of the linear operator
$\mathbf{L}$. Then it is easy to see that
\begin{equation*}
L=I+\sum\limits_{i=1}^{m}\frac{\sin p_{i}\pi}{\pi}\int_{0}^{\infty}[(\lambda I+X)^{-1}X^{\frac{1}{2}}A^{}_{i}]^{T}\otimes [(\lambda I+X)^{-1}X^{\frac{1}{2}}A^{}_{i}]^{*}\lambda^{p_{i}-1}d\lambda.
\end{equation*}
Let\begin{eqnarray}\label{eq26}
&&L^{-1}=S+\textbf{i}\Sigma,\nonumber\\
&&L^{-1}(I\otimes B_{i}^{*})=L^{-1}(I\otimes
(X^{p_{i}}A_{i})^{*})=U_{i1}+\textbf{i}\Omega_{i1},\nonumber\\
&&L^{-1}(B_{i}^{T}\otimes I)\Pi=L^{-1}((X^{p_{i}}A_{i})^{T}\otimes
I)\Pi=U_{i2}+\textbf{i}\Omega_{i2},\nonumber\\
&& S_{c}=\left[\begin{array}{cc}
S & -\Sigma \\
\Sigma & S
\end{array}\right],\;\;\;\;
U_{i}=\left[\begin{array}{cc}
U_{i1}+U_{i2} & \Omega_{i2}-\Omega_{i1}\\
\Omega_{i1}+\Omega_{i2} & U_{i1}-U_{i2}\end{array}\right],\;\;i=1,2, \cdots, m,
\end{eqnarray}
$$\vector H=x+\textbf{i}y,\;\;\vector E_{i}=a_{i}+\textbf{i}b_{i},
\;\;g=(x^{T}, y^{T}, a^{T}_{1}, b^{T}_{1}, \cdots, a^{T}_{m}, b^{T}_{m})^{T},\;M=(E_{1}, E_{2}, \cdots, E_{m},H),$$
where $x, y, a_{i}, b_{i}
\in\mathcal{R}^{n^{2}},\; S, \Sigma, U_{i1}, U_{i2}, \Omega_{i1}, \Omega_{i2}\in\mathcal{R}^{n^{2}\times n^{2}},\;i=1, 2, \cdots, m, $ $\Pi$ is the vec-permutation matrix, such that
$$\vector \;A^{T}=\Pi \;\vector \;A.$$
Then we obtain that
\begin{eqnarray*}
&&c(X)=\frac{1}{\xi}\max_{\begin{array}{c}M\neq 0\\
\end{array}}
\frac{||\mathbf{L}^{-1}(\rho H+\sum\limits_{i=1}^{m}\eta_{i}(B_{i}^{*}E_{i}+E_{i}^{*}B_{i}))
||_{F}}{||(E_{1}, E_{2}, \cdots, E_{m}, H)||_{F}}\\
&=&\frac{1}{\xi}\!\!\!\max_{\begin{array}{c}M\neq 0\\
\end{array}}
\!\!\frac{||\rho{L}^{-1} \vector H+\sum\limits_{i=1}^{m}\eta_{i}{L}^{-1}((I\otimes B_{i}^{*})\vector E_{i}+(B_{i}^{T}\otimes I)\vector E_{i}^{*})
||}{\left\|\left(
\vector E_{1},
\vector E_{2},
\cdots,
\vector E_{m}, \vector H
\right)\right\|}\\
&=&\frac{1}{\xi}\!\!\!\max_{\begin{array}{c}M\neq 0\\
\end{array}}
\!\!\!\!\frac{||\rho(S+\textbf{i}\Sigma)(x+\textbf{i}y)+
\sum\limits_{i=1}^{m}\eta_{i}[(U_{i1}+\textbf{i}\Omega_{i1})(a_{i}+\textbf{i}b_{i})
||+(U_{i2}+\textbf{i}\Omega_{i2})(a_{i}-\textbf{i}b_{i})
]||}{\left\|\left(
\vector E_{1},
\vector E_{2},
\cdots,
\vector E_{m}, \vector H
\right)\right\|}\\
&=&\frac{1}{\xi}\!\!\!\max_{\begin{array}{c}g\neq 0\\
\end{array}}
\frac{||(\rho\; S_{c}, \eta_{1}U_{1}, \eta_{2}U_{2}, \cdots, \eta_{m}U_{m})g
||}{\|g\|}\\
&=&\frac{1}{\xi}\;||\;(\rho S_{c},\;\eta_{1}U_{1},\;\eta_{2}U_{2}, \cdots, \eta_{m} U_{m})||,\;\;E_{i}\in\mathcal{C}^{n\times
n}, H\in\mathcal{H}^{n\times n}.
\end{eqnarray*}

Then we have the following theorem.
\begin{theorem}\label{thm10}                                           
The condition number $c(X)$ defined by (\ref{eq22}) has the
explicit expression
\begin{equation}\label{eq31}                                        
c(X)=\frac{1}{\xi}\;||\;(\rho S_{c},\;\eta_{1}U_{1},\;\eta_{2}U_{2}, \cdots, \eta_{m} U_{m})||,
\end{equation}
where the matrices $S_{c}$ and $U_{i}$ are defined as in (\ref{eq26}).
\end{theorem}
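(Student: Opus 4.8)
The plan is to justify rigorously the string of equalities displayed immediately before the statement, which converts the definition~(\ref{eq22}) of $c(X)$ into the spectral norm of an explicit matrix. First I would use Theorem~\ref{thm9} and Remark~\ref{rem3} to record the first-order expansion
\[
\Delta X=\mathbf{L}^{-1}\Delta Q+\mathbf{L}^{-1}\sum_{i=1}^{m}(B_{i}^{*}\Delta A_{i}+\Delta A_{i}^{*}B_{i})+O\!\left(\|(\Delta A_{1},\dots,\Delta A_{m},\Delta Q)\|_{F}^{2}\right),
\]
which is legitimate since Lemma~\ref{lem5} makes $\mathbf{L}$ invertible. Substituting this into~(\ref{eq22}) and observing that the quadratic remainder contributes $O(\delta)$ to the numerator while the denominator scales like $\delta$, the remainder vanishes as $\delta\to0$; what is left is the operator norm of the bounded linear map that sends $(\Delta A_{1},\dots,\Delta A_{m},\Delta Q)$ to $\mathbf{L}^{-1}\big(\Delta Q+\sum_{i}(B_{i}^{*}\Delta A_{i}+\Delta A_{i}^{*}B_{i})\big)$, with the weights $\eta_{i},\rho$ in the source and $\xi$ in the target. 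Writing $E_{i}=\Delta A_{i}/\eta_{i}$, $H=\Delta Q/\rho$ and $M=(E_{1},\dots,E_{m},H)$, this reads $c(X)=\tfrac1\xi\max_{M\ne0}\|\mathbf{L}^{-1}(\rho H+\sum_{i}\eta_{i}(B_{i}^{*}E_{i}+E_{i}^{*}B_{i}))\|_{F}/\|M\|_{F}$.

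Next I would vectorize. Using that $\mathrm{vec}$ is a Frobenius-to-Euclidean isometry together with $\mathrm{vec}(B_{i}^{*}E_{i})=(I\otimes B_{i}^{*})\mathrm{vec}\,E_{i}$ and $\mathrm{vec}(E_{i}^{*}B_{i})=(B_{i}^{T}\otimes I)\mathrm{vec}\,E_{i}^{*}=(B_{i}^{T}\otimes I)\Pi\,\overline{\mathrm{vec}\,E_{i}}$, the vector inside the norm becomes $\rho L^{-1}\mathrm{vec}\,H+\sum_{i}\eta_{i}\big(L^{-1}(I\otimes B_{i}^{*})\mathrm{vec}\,E_{i}+L^{-1}(B_{i}^{T}\otimes I)\Pi\,\overline{\mathrm{vec}\,E_{i}}\big)$, where $L$ is the matrix of $\mathbf{L}$. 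Setting $\mathrm{vec}\,H=x+\mathbf{i}y$, $\mathrm{vec}\,E_{i}=a_{i}+\mathbf{i}b_{i}$ (so $\overline{\mathrm{vec}\,E_{i}}=a_{i}-\mathbf{i}b_{i}$) and introducing the real matrices $S,\Sigma,U_{i1},U_{i2},\Omega_{i1},\Omega_{i2}$ as in~(\ref{eq26}), I would separate real and imaginary parts and collect the coefficients of $x,y$ and of $a_{i},b_{i}$. The real and imaginary blocks then assemble into $\rho S_{c}(x^{T},y^{T})^{T}+\sum_{i}\eta_{i}U_{i}(a_{i}^{T},b_{i}^{T})^{T}$ with exactly the matrices $S_{c},U_{i}$ of~(\ref{eq26}). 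Since $\|u+\mathbf{i}v\|=\|(u^{T},v^{T})^{T}\|$ and, similarly, $\|M\|_{F}=\|g\|$ for $g=(x^{T},y^{T},a_{1}^{T},b_{1}^{T},\dots,a_{m}^{T},b_{m}^{T})^{T}$, this gives $c(X)=\tfrac1\xi\max_{g\ne0}\|(\rho S_{c},\eta_{1}U_{1},\dots,\eta_{m}U_{m})g\|/\|g\|$, and the last maximum is by definition the spectral norm of the stacked matrix, which is~(\ref{eq31}).

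Most of this is routine Kronecker-product and real/imaginary bookkeeping; the step that needs genuine care — and which I expect to be the main obstacle to present cleanly — is the conjugation $\overline{\mathrm{vec}\,E_{i}}=a_{i}-\mathbf{i}b_{i}$ coming from $\mathrm{vec}(E_{i}^{*}B_{i})$. It is precisely this sign flip on $b_{i}$, interacting with the real/imaginary splitting $L^{-1}(B_{i}^{T}\otimes I)\Pi=U_{i2}+\mathbf{i}\Omega_{i2}$, that produces the \emph{asymmetric} $2\times2$ block $U_{i}$ (with entries built from $U_{i1}\pm U_{i2}$ and $\Omega_{i1},\Omega_{i2}$ in mixed signs) rather than the symmetric pattern of $S_{c}$; every sign in that block must be tracked exactly. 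Throughout I would keep the convention, inherited from the displayed computation, that $\mathrm{vec}\,H=x+\mathbf{i}y$ with $x,y\in\mathcal{R}^{n^{2}}$, so that under this convention the identity~(\ref{eq31}) follows.
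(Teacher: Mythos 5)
Your proposal is correct and follows essentially the same route as the paper: substitute the first-order expansion from Remark \ref{rem3} into the definition (\ref{eq22}), pass to the rescaled variables $E_{i}, H$, vectorize via the Kronecker identities and the vec-permutation matrix $\Pi$, split into real and imaginary parts using (\ref{eq26}), and read off the spectral norm of the stacked matrix. Your explicit attention to the conjugation $\vector E_{i}^{*}=\Pi\,\overline{\vector E_{i}}$ and the resulting asymmetric block structure of $U_{i}$ is exactly the delicate point in the paper's displayed computation, handled the same way.
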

\begin{rem}\label{rem4}                                          
From (\ref{eq31}) we have the relative condition number
\begin{equation*} \label{eq32}                                               
c_{rel}(X)=\frac{||\;(||Q||_{F}S_{c},\;||A_{1}||_{F}
U_{1},\;||A_{2}||_{F}
U_{2},\;\cdots, ||A_{m}||_{F}
U_{m})||}{||X||_{F}}.
\end{equation*}
\end{rem}
\subsection{The real case  }
In this subsection we consider the real case, i.e.,
all the coefficient matrices $A_{i}$, $Q$ of Eq.(\ref{eq1}) are real. In
such a case the corresponding solution $X$ is also real.
Completely similar arguments as Theorem \ref{thm10} gives the following
theorem.
\begin{theorem}
Let $A_{i}$, $Q$ be real, $c(X)$ be the condition number defined by
(\ref{eq22}). Then $c(X)$ has the explicit expression
\begin{equation*}
c(X)=\frac{1}{\xi}\;||\;(\rho S_{r},\;\eta_{1}U_{1},\;\eta_{2}U_{2}, \cdots, \eta_{m}U_{m})\;||,
\end{equation*}
where
\begin{eqnarray*}
&&S_{r}=\left(I+\sum\limits_{i=1}^{m}\frac{\sin p_{i}\pi}{\pi}\int_{0}^{\infty}[(\lambda I+X)^{-1}X^{\frac{1}{2}}A^{}_{i}]^{T}\otimes [(\lambda I+X)^{-1}X^{\frac{1}{2}}A^{}_{i}]^{*}\lambda^{p_{i}-1}d\lambda\right)^{-1},\\
&&U_{i}=S_{r}[I\otimes(A_{i}^{T}X^{p_{i}})+((A_{i}^{T}X^{p_{i}})\otimes I)\Pi],\;\;i=1, 2, \cdots, m.
\end{eqnarray*}
\end{theorem}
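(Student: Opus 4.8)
The plan is to mirror the derivation that was carried out in full for the complex case (Theorem \ref{thm10}), making only the simplifications that the reality of $A_i$ and $Q$ forces. Since all data are real, the solution $X$ of Eq.(\ref{eq1}) is real, hence so are $B_i=X^{p_i}A_i$ and the matrix representation $L$ of the operator $\textbf{L}$; consequently $L^{-1}=S_r$ is real, and there is no imaginary part $\Sigma$ to carry along. First I would start from the first-order expansion (\ref{eq21}), restricted now to real perturbations $\Delta A_i$ and real Hermitian (i.e.\ symmetric) $\Delta Q$, so that
$$\Delta X=\textbf{L}^{-1}\Delta Q+\textbf{L}^{-1}\sum_{i=1}^m(B_i^{*}\Delta A_i+\Delta A_i^{*}B_i)+O(\|(\Delta A_1,\dots,\Delta A_m,\Delta Q)\|_F^2).$$
Substituting this into the definition (\ref{eq22}) of $c(X)$ and dropping the higher-order term in the limit, I get
$$c(X)=\frac{1}{\xi}\max_{(E_1,\dots,E_m,H)\neq 0}\frac{\|\textbf{L}^{-1}(\rho H+\sum_{i=1}^m\eta_i(B_i^{*}E_i+E_i^{*}B_i))\|_F}{\|(E_1,\dots,E_m,H)\|_F},$$
exactly as in the complex case but with all matrices real.

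Next I would vectorize. Using $\vector(B_i^{*}E_i)=(I\otimes B_i^{*})\vector E_i$ and $\vector(E_i^{*}B_i)=(B_i^{T}\otimes I)\vector(E_i^{*})=(B_i^{T}\otimes I)\Pi\,\vector E_i$, where $\Pi$ is the vec-permutation matrix with $\vector(A^T)=\Pi\vector A$, and observing that since everything is real $\vector E_i$ and $\vector H$ are genuine real vectors, the numerator becomes
$$\left\|\,L^{-1}\Big(\rho\,\vector H+\sum_{i=1}^m\eta_i\big[I\otimes B_i^{*}+(B_i^{T}\otimes I)\Pi\big]\vector E_i\Big)\right\|.$$
Writing $B_i^{*}=B_i^{T}=(X^{p_i}A_i)^{T}=A_i^{T}X^{p_i}$ (using that $X$, hence $X^{p_i}$, is real symmetric, so $(X^{p_i}A_i)^{T}=A_i^{T}X^{p_i}$), and setting $U_i=S_r\big[I\otimes(A_i^{T}X^{p_i})+((A_i^{T}X^{p_i})\otimes I)\Pi\big]$ and $S_r=L^{-1}$, the whole expression inside the norm is $(\rho S_r,\ \eta_1 U_1,\ \dots,\ \eta_m U_m)\,g$ with $g=(\vector H^{T},\vector E_1^{T},\dots,\vector E_m^{T})^{T}$. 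Taking the maximum over $g\neq 0$ turns the Rayleigh-type quotient into the spectral norm of that block matrix, giving
$$c(X)=\frac{1}{\xi}\,\big\|\,(\rho S_r,\ \eta_1 U_1,\ \eta_2 U_2,\ \dots,\ \eta_m U_m)\,\big\|,$$
which is the claimed formula.

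The only point requiring a little care — and the place I would watch for the ``main obstacle'' — is the constraint that $H$ (the perturbation direction for $Q$) ranges over \emph{symmetric} real matrices rather than all real matrices, so $\vector H$ is not a free vector in $\mathcal{R}^{n^2}$; but exactly as in the complex treatment this restriction does not shrink the maximum, because $\textbf{L}^{-1}$ maps $\mathcal{H}^{n\times n}$ (symmetric matrices, in the real case) into itself and the supremum over symmetric $H$ of $\|\textbf{L}^{-1}H\|_F/\|H\|_F$ already equals the operator norm of the relevant block; the symmetry of the optimal $H$ is automatically consistent because $\textbf{L}^{-1}$ preserves symmetry and $B_i^{*}E_i+E_i^{*}B_i$ is always symmetric. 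With that remark in place, the argument is a verbatim real specialization of the computation preceding Theorem \ref{thm10}, and no new estimate is needed. I would therefore present the proof simply as: ``Repeating the argument of Theorem \ref{thm10} with $\textbf{i}=0$, $S=S_r$, $\Sigma=0$, $U_{i1}+U_{i2}$ replaced by the real block $U_i$ above, yields the stated expression.''
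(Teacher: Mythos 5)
Your proposal is correct and follows essentially the same route as the paper, which gives no separate proof for the real case and simply states that ``completely similar arguments as Theorem \ref{thm10}'' yield the result; your real specialization of that vectorization computation (with $\Sigma=0$ and the blocks $U_{i1}+U_{i2}$ collapsed into the single real block $U_i$) is exactly what is intended. Your added remark about the symmetric constraint on $H$ is a sensible point of care that the paper itself does not spell out.
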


\begin{rem}\label{rem2}                                              
In the real case the relative condition number is given by
\begin{equation*}
c_{rel}(X)=\frac{||\;(||Q||_{F}S_{r},\;||A_{1}||_{F}U_{1},\;||A_{2}||_{F}U_{2}, \cdots,||A_{m}||_{F}
U_{m}||}{||X||_{F}}.
\end{equation*}
\end{rem}
\section{Numerical Examples }
To illustrate the results of the previous sections, in this
section three simple examples are given, which were carried out
using MATLAB 7.1. For the stopping criterion we take
$\varepsilon_{k+1}(X)=\|X-\sum\limits_{i=1}^{m}A_{i}^{*}X^{p_{i}}A_{i}-I\|<1.0e-10$


\begin{example}        \label{ex1}                                                   
We consider the matrix equation $$X-A_{1}^{*}X^{\frac{1}{2}}A_{1}-A_{2}^{*}X^{\frac{1}{3}}A_{2}=I,$$
with \[A_{1}=\frac{\frac{1}{3}+2\times 10^{-2}}{||A||}A, \;\;A_{2}=\frac{\frac{1}{6}+3\times 10^{-2}}{||A||}A,\;\;\;A=\left(\begin{array}{cc}
2 & 0.95\\
0 & 1
\end{array}\right).
\]
Suppose that the
coefficient matrices $A_{1}$ and $A_{2}$ are perturbed to
$\widetilde{A_{i}}=A_{i}+\Delta A_{i},i=1,2$, where $$\Delta A_{1}=\frac{10^{-j}}{\|C^{T}+C\|}(C^{T}+C),\;\;\Delta A_{2}=\frac{3\times10^{-j-1}}{\|C^{T}+C\|}(C^{T}+C)$$ and $C$ is a random matrix generated by
MATLAB function \textbf{randn}.

We now consider the corresponding  perturbation bounds for the solution $X$ in Theorem \ref{thm6} and Theorem \ref{thm9}.

The conditions in Theorem \ref{thm6} are
\begin{eqnarray*}
con1&=& 2(\beta-s)-b>0,\;\;con2=\beta+\|\Delta Q\|-\sum\limits_{i=1}^{m}(1-{p_{i}})\beta^{p_{i}}\|A_{i}\|^{2}>0,\\
  con3 &=& b^{2}-4(\beta-s)(s+\|\Delta Q\|)\geq 0.
\end{eqnarray*}

The conditions in Theorem \ref{thm9} are
\begin{eqnarray*}
con4&=& 1-\sigma>0,\;\;
  con5 = \frac{(1-\sigma)^{2}}
   {\zeta+\sigma\zeta+2\theta+2\sqrt{(\zeta+\theta)(\sigma\zeta+\theta)})}-\epsilon>0.
\end{eqnarray*}
By computation, we list them in Table \ref{tab:1}.
\begin{table}[h t b]
\caption{Conditions for Example \ref{ex1} with different values of j}
\label{tab:1}
\begin{tabular}{p{1.5cm}p{2.4cm}p{2.4cm}p{2.4cm}p{2.4cm}}
\hline\noalign{\smallskip}
$j$                & 4             & 5              & 6              & 7\\
\noalign{\smallskip}\hline\noalign{\smallskip}
$con1$ & $1.1139$ & $1.1141$ & $1.1141$ & $1.1141$  \\
$con2$ & $0.9358$ & $0.9358$ & $ 0.9357$ & $0.9357$\\
$con3$ & $0.8751$ & $0.8755$ & $ 0.8756$ & $0.8756$\\
$con4$ & $1.0000$ & $1.0000$ & $1.0000$ & $1.0000$\\
$con5$ & $0.7955$ & $0.7957$ & $ 0.7957$ & $0.7957$\\ \hline
\noalign{\smallskip}
\end{tabular}
\end{table}

The results listed in Table \ref{tab:1} show that the conditions of Theorem \ref{thm6}
and Theorem \ref{thm9} are satisfied.
\vskip 0.1in
By Theorem \ref{thm6} and Theorem \ref{thm9}, we can
compute the relative perturbation bounds $\xi_{1}, \xi_{2}=\frac{\nu}{\|X\|},$  respectively. These results averaged as the geometric mean of  10 randomly perturbed runs.
Some results are listed in Table \ref{tab:2}.
\begin{table}[h t b]
\caption{Results for Example \ref{ex1} with different values of j}
\label{tab:2}
\begin{tabular}{p{1.5cm}p{2.4cm}p{2.4cm}p{2.4cm}p{2.4cm}}
\hline\noalign{\smallskip}
$j$                & 4             & 5              & 6              & 7\\
\noalign{\smallskip}\hline\noalign{\smallskip}
$\frac{\|\widetilde{X}-X\|}{\|X\|}$ & $3.9885\times 10^{-5}$ & $5.1141\times 10^{-6}$ & $3.6513\times10^{-7}$ & $ 4.6136\times10^{-8}$  \\
 $\xi_{1}$ &  $
1.9765\times 10^{-4}$ & $ 2.3869\times 10^{-5}$ & $ 1.8133\times
10^{-6}$ & $2.1028\times10^{-7}$\\
 $\xi_{2}$ &  $
6.5069\times 10^{-5}$ & $ 7.6524\times 10^{-6}$ & $6.0514\times
10^{-7}$ & $6.9911\times10^{-8}$\\ \hline
\noalign{\smallskip}
\end{tabular}
\end{table}

The results listed in Table \ref{tab:2} show that the perturbation
bound $\xi_{2}$ given by Theorem \ref{thm9} is fairly sharp, while the bound $\xi_{1}$ given by Theorem \ref{thm6} which does not depended on the exact solution is conservative.
\end{example}
\begin{example}\label{ex2}
We consider the matrix equation
 $$X-A_{1}^{*}X^{0.5}A_{1}-A_{2}^{*}X^{0.25}A_{2}=I,$$
with \[A_{1}=\frac{\frac{1}{3}+2\times 10^{-2}}{||A||}A, \;\;A_{2}=\frac{\frac{1}{6}+3\times 10^{-2}}{||A||}A,\;\;\;A=\left(\begin{array}{ccccc}
2 & 1& 0& 0& 0\\
1 & 2& 1& 0& 0\\
0 & 1& 2& 1& 0\\
0 & 0& 1& 2& 1\\
0 & 0& 0& 1& 2
\end{array}\right).
\]                                                          
Choose $\widetilde{X}_1=A$, $\widetilde{X}_2=2A$. Let the
approximate solution $\widetilde{X}_k$ of $X$ be given with the
iterative method (\ref{eq13}), where $k$ is the iterative number.

The residual $R(\widetilde{X}_k) \equiv
I+A_{1}^*\widetilde{X}_k^{0.5}A_{1}+A_{2}^*\widetilde{X}_k^{0.25}A_{2}-\widetilde{X}_k$ satisfies the conditions in
Theorem \ref{thm7}.

By Theorem \ref{thm7}, we can compute the backward error
bound for $\widetilde{X}_k$
$$\parallel \widetilde{X}_k-X\parallel \leq
\mu\|R(\widetilde{X}_k)\|,$$ 
where $$\mu=\frac{2\|\widetilde{X}_{k}\|\|\widetilde{X}_{k}^{-1}\|}{\theta_{1}+
\sqrt{\theta_{1}^{2}-4\|\widetilde{X}_{k}^{-1}\|\|R(\widetilde{X}_{k})\|}},\;\;
\theta_{1}\equiv1+\|\widetilde{X}_{k}^{-1}\|\|R(\widetilde{X}_{k})\|-
(0.5\|\widetilde{X_{k}}^{\frac{1}{4}}A_{1}\widetilde{X_{k}}^{-\frac{1}{2}}\|^{2}+0.75
\|\widetilde{X_{k}}^{\frac{1}{8}}A_{2}\widetilde{X_{k}}^{-\frac{1}{2}}\|^{2}).$$  Some results are
listed in Table\ref{tab:3}.
\begin{table}[h t b]
\caption{Results for Example \ref{ex2} with different values of k}
\label{tab:3}
\begin{tabular}{p{1.5cm}p{2.4cm}p{2.4cm}p{2.4cm}p{2.4cm}}
\hline\noalign{\smallskip}
$k$           & 8     &10             & 12              & 14            \\
\noalign{\smallskip}\hline\noalign{\smallskip}
  $||\widetilde{X}_k-X||$& $6.1091\times 10^{-4}$ &$4.0865\times 10^{-5}$ & $  2.6837\times 10^{-6}$& $1.7372\times 10^{-7}$\\ \hline
 $\mu|| R(\widetilde{X}_k)||$ & $  7.2094\times 10^{-4}    $&$4.8224\times 10^{-5}$ & $3.1670\times 10^{-6}$ &$2.0506\times 10^{-7}$ \\ \hline
\noalign{\smallskip}
\end{tabular}
\end{table}

The results listed in Table \ref{tab:3} show that the error bound
given by Theorem \ref{thm7} is fairly sharp.
\end{example}

\begin{example}               \label{ex3}                                               
We study the matrix equation
 $$X-A_{1}^{*}X^{\frac{1}{2}}A_{1}-A_{2}^{*}X^{\frac{1}{3}}A_{2}=Q,$$
with \[A_{1}=\left(\begin{array}{cc}
0 & 0.55+10^{-k}\\
0 & 0
\end{array}\right),\;\;A_{2}=\frac{1}{2}A_{1},\;\;Q=\left(\begin{array}{cc}
1 & 1\\
0 & 1
\end{array}\right).
\]
By Remark \ref{rem3}, we can compute the relative
condition number $c_{rel}(X).$ Some results are listed in Table
\ref{tab:4}.
\begin{table}[h t b]
\caption{Results for Example \ref{ex3} with different values of $k$}
\label{tab:4}
\begin{tabular}{p{2cm}p{1.7cm}p{1.7cm}p{1.7cm}p{1.7cm}p{1.7cm}}
\hline\noalign{\smallskip}
  $k$ & 1 & 3 & 5 & 7 & 9 \\ \noalign{\smallskip}\hline\noalign{\smallskip}
 $c_{rel}(X)$& 1.1888 &  1.1025 & 1.1019 &  1.1019 &  1.1019 \\ \hline\noalign{\smallskip}
\end{tabular}
\end{table}

The numerical results listed in the second line show that the unique
positive definite solution $X$ is well-conditioned.
\end{example}
\section*{Acknowledgements}
The author wishes to express her gratitude to the referees for their fruitful comments
and suggestions regarding the earlier version of this paper.


\begin{thebibliography}{00}

\bibitem{s2}
W. N. Anderson, G. B. Kleindorfer, M. B. Kleindorfer, and
M. B. Woodroofe, Consistent estimates of the parameters of a linear
systems, Ann. Math. Statist. 40 (1969) 2064--2075.

\bibitem{s6}
W. N. Anderson, T. D. Morley, and G. E. Trapp, The cascade
limit, the shorted operator and quadratic optimal control, in
Linear Circuits, Systems and Signal Processsing: Theory and
Application (Christopher I. Byrnes, Clyde. F. Martin, and Richard
E. Saeks, Eds.), North-Holland, New York, 1988, 3--7.

\bibitem{s3}
R. S. Bucy, A priori bounds for the Riccati equation,
in proceedings of the Berkley Symposium on Mathematical Statistics
and Probability, Vol. III: Probability Theory, Univ. of California Press, Berkeley, 1972, 645--656.

\bibitem{s7}
B. L. Buzbee, G. H. Golub, C. W. Nielson, On direct methods for solving Poisson's
equations, SIAM J. Numer. Anal. 7 (1970)627-656.

\bibitem{s21}
J. Cai, G.L. Chen, On the Hermitian positive definite solution of nonlinear matrix equation
$X^{s}+A^{*}X^{-t}A=Q,$ Appl. Math. Comput. 217 (2010) 2448--2456.

\bibitem{s19}
X.F. Duan, A.P. Liao, on the existence of Hermitian positive definite solutions of the
matrix equation $X^{s}+A^{*}X^{-t}A=Q,$ Linear Algebra Appl. 429 (2008) 673--687.

\bibitem{s29}
X.F. Duan, A.P. Liao, B.Tang, On the nonlinear matrix equation $X-\sum\limits_{i=1}^{m}A_{i}^{*}X^{\delta_{i}}A_{i}=Q$, Linear Algebra Appl. 429 (2008) 110-121.

\bibitem{s31}
X.F. Duan, C.M. Li, A.P. Liao, Solutions and perturbation analysis for the nonlinear matrix equation $X+\sum\limits_{i=1}^{m}A_{i}^{*}X^{-1}A_{i}=I$, Appl. Math. Comput. 218 (2011) 4458-4466.

\bibitem{s10}
J.C. Engwerda, On the existence of a positive definite
solution of the matrix equation $X+A^TX^{-1}A=I$, Linear Algebra
Appl. 194 (1993) 91--108.

\bibitem{s11}
J.C. Engwerda, A.C.M. Ran, A.L. Rijkeboer,
Necessary and sufficient conditions for the existence of a positive
definite solution of the matrix equation $X+A^TX^{-1}A=Q$, Linear
Algebra Appl. 186 (1993) 255--275.

\bibitem{s35}
B.R. Fang, J.D. Zhou, Y.M. Li, Matrix Theory, Tsinghua University Press, Beijing, 2006.

\bibitem{s13}
A. Ferrante, Hermitian solutions of the equation
$X=Q+NX^{-1}N^*$, Linear Algebra Appl. 247 (1996) 359--373.


\bibitem{s12}
C.H. Guo, P. Lancaster, Iterative solution of two matrix
equations, Math. Comp. 68 (1999) 1589--1603.

\bibitem{s28}
V.I. Hasanov, Positive definite solutions of the
matrix equations $X\pm A^*X^{-q}A=Q$, Linear Algebra Appl.
404 (2005) 166--182.

\bibitem{s25}
V.I. Hasanov, I.G. Ivanov, On two perturbation
estimates of the extreme solutions to the equations $X\pm
A^*X^{-1}A=Q$, Linear Algebra Appl. 413 (2006), 81--92.

\bibitem{s24}
V.I. Hasanov, I.G. Ivanov, F. Uhlig, Improved
perturbation estimates for the matrix equation $X\pm A^*X^{-1}A=Q$,
Linear Algebra Appl. 379 (2004) 113--135.


\bibitem{s30}
Y.M. He, J.H. Long, On the Hermitian positive definite solution of the nonlinear matrix equation $X+\sum\limits_{i=1}^{m}A_{i}^{*}X^{-1}A_{i}=I$, Appl. Math. Comput. 216 (2010) 3480-3485.

\bibitem{s14}
I.G. Ivanov, S.M. El-Sayed, Properties of positive definite
solution of the equation $X+A^*X^{-2}A=I$, Linear Algebra Appl.
279(1998) 303--316.

\bibitem{s15}
I.G. Ivanov, V.I. Hasanov, B.V. Minchev, On matrix
equations $X\pm A^*X^{-2}A=I$, Linear Algebra Appl. 326 (2001) 27--44.

\bibitem{s38}
Guanjun Jia, Dongjie Gao, Perturbation estimates for the nonlinear matrix equation
$X- A^*X^{q}A=Q$ $(0<q<1),$ J. Appl. Math. Comput. 35 (2011) 295-304.

\bibitem{s26}
J. Li, The Hermitian positive
definite solutions and perturbation analysis of the matrix equation
$X- A^*X^{-1}A=Q$, Math. Numer. Sinica, 30 (2008) 129--142. (in Chinese).

\bibitem{s27}
J. Li, Y.H. Zhang, Perturbation analysis of the matrix equation $X-A^*X^{-q}A=Q$, Linear Algebra Appl.
431 (2009) 1489--1501.


\bibitem{s18}
X.G. Liu, H. Gao, On the positive definite solutions of the matrix equations
$X^{s}\pm A^{T}X^{-t}A=I_{n}$, Linear Algebra Appl.
368 (2003) 83--97.

\bibitem{s4}
D. V. Ouellette, Schur complements and statistics,
Linear Algebra Appl. 36 (1981) 187--295.

\bibitem{s5}
W. Pusz and S. L. Woronowitz, Functional caculus for
sequlinear forms and purification map, Rep. Math. Phys.
8 (1975)159--170.

\bibitem{s36}
M.C.B. Reurings, Symmetric Matrix Equations, The Netheerlands: Universal Press, 2003.

\bibitem{s37}
J.R. Rice, A theory of condition, SIAM J. Numer. Anal. 3 (1966) 287-310.

\bibitem{s23}
J.G. Sun, S.F. Xu, perturbation analysis of the maximal
solution of the matrix equation $X+A^*X^{-1}A=P$.  II, Linear
Algebra Appl. 362 (2003) 211--228.

\bibitem{s32}
S.G. Wang, M.X. Wu, Z.Z. Jia, Matrix inequalities, Science Press, Beijing, 2006.

\bibitem{s34}
J.F. Wang, Y.H.Zhang, B.R. Zhu, The Hermitian positive
definite solutions of the matrix equation
$X+A^*X^{-q}A=I (q>0)$, Math. Numer. Sinica, 26 (2004) 61--72. (in Chinese).

\bibitem{s22}
S.F. Xu, Perturbation analysis of the maximal
solution of the matrix equation $X+A^*X^{-1}A=P$, Linear Algebra
Appl. 336 (2001) 61--70.

\bibitem{s20}
X.Y. Yin, S.Y. Liu, L. Fang, Solutions and perturbation estimates for the matrix equation
$X^{s}+A^{*}X^{-t}A=Q,$ Linear Algebra Appl. 431 (2009) 1409-1421.

\bibitem{s39}
X.Y. Yin, S.Y. Liu and T.X. Li, ON positive definite solutions of the matrix equation
$X+A^*X^{-q}A=Q\;(0<q\leq 1),$ TAIWAN J. Math. 16 (2012) 1391--1407.

\bibitem{s1}
J. Zabezyk, Remarks on the control of discrete time distributed parameter systems,
SIAM J. Control. 12 (1974) 721--735.


\bibitem{s9}
X. Zhan, Computing the extremal positive definite solutions of a
matrix equations, SIAM J. Sci. Comput. 17 (1996) 1167--1174.

\bibitem{s33}
X. Zhan, Matrix Inequalities, Springer-Verlag, Berlin, 2002.

\bibitem{s8}
X. Zhan, J. Xie, On the matrix equation
$X+A^*X^{-1}A=I$, Linear Algebra Appl. 247 (1996) 337--345.

\bibitem{s16}
Y.H. Zhang, On Hermitian positive definite solutions
of matrix equation $X+A^*X^{-2}A=I$, Linear Algebra Appl.
372 (2003) 295--304.

\bibitem{s17}
Y.H. Zhang, On Hermitian positive definite solutions of matrix
equation $X- A^*X^{-2}A=I$, J. Comp. Math. 23 (2005) 408--418.


















\end{thebibliography}
\end{document}